\documentclass{article}
\title{Instability of Ginzburg-Landau Vortices on Manifolds}
\author{Ko-Shin Chen \\Indiana University, Bloomington \\ koshchen@indiana.edu}
\date{\today}
\usepackage{amsmath,amsthm,amssymb}
\theoremstyle{plain}
\newtheorem{prop}{Proposition}[section]
\newtheorem{lem}{Lemma}[section]
\newtheorem{thm}{Theorem}[section]
\theoremstyle{remark}
\newtheorem{rk}{\bf Remark}[section]
\numberwithin{equation}{section}
\usepackage{graphicx}

\begin{document}
\maketitle

\begin{abstract}
We investigate two settings of Ginzburg-Landau posed on a manifold
where vortices are unstable. The first is an instability result for
critical points with vortices of the Ginzburg-Landau energy posed on
a simply connected, compact, closed $2$-manifold. The second is a
vortex annihilation result for the Ginzburg-Landau heat flow posed
on certain surfaces of revolution with boundary.
\end{abstract}

\section{Introduction}

    In this paper we consider the Ginzburg-Landau energy posed on a $2$-manifold.
We will present two results, one for critical points of the Ginzburg-Landau energy 
and one for the Ginzburg-Landau heat flow, both showing the non-existence of stable 
vortex solutions under certain geometric assumptions on the manifold. We say a critical
point is unstable if there is a direction in which the second variation of the energy is
negative. For the heat flow, we will show that all initial data, even those containing 
vortices, will eventually converge to a vortex-free solution.

    Let $E_\varepsilon$ be Ginzburg-Landau energy on an orientable manifold $\cal M$ equipped with a metric $g$ for $u: \cal M \rightarrow \mathbb C$,
\[
        E_\varepsilon(u) = \frac{1}{2} \int_{\cal M} ||\nabla_g u||_g^2 + \frac{(1-|u|^2)^2}{2\varepsilon^2} dv_g.
\]
There is a vast literature on Ginzburg-Landau, but we review here
just a few of the results most closely related to our investigation.
When $\cal M$ is a bounded domain $\Omega \subset \mathbb R^2$, and
under an $S^1$-valued Dirichlet condition, Bethuel, Brezis and
H\'{e}lein establish in \cite{BBH} that vortices of minimizers
converge as $\varepsilon \rightarrow 0$ to a finite set of points or
limiting vortices $\{a_i\}$. Here vortices refer to zeros of the
order parameter $u_{\varepsilon}$ carrying nonzero degree. Moreover,
the limiting vortex locations  $\{a_i\}$ will minimize a
renormalized energy $W$. Another important result comes in \cite{JM}
where for $u:{\cal M} = \Omega \subset \mathbb R^n \rightarrow
\mathbb R^N$, Jimbo and Morita prove that under homogeneous Neumann
boundary conditions, if $\Omega$ is convex, the only stable critical
points are constants for any $\varepsilon > 0$.

Most important to our work on stability of critical points is the
work of Serfaty in \cite {S} on Ginzburg-Landau in simply connected
planar domains. Here she shows that there is no nonconstant stable
critical point of $E_\varepsilon$ with homogeneous Neumann boundary
conditions for $\varepsilon$ small. To achieve this, she shows that
the renormalized energy has no stable critical points. Then using
her theory of ``$C^2$-Gamma convergence," she argues that there must
be unstable directions for the Hessian of $E_{\varepsilon}$ as well
for $\varepsilon$ small. Our first main result (Theorem \ref{main})
in this paper is that for compact, simply connected $2$-manifolds
without boundary, any critical points must be unstable when
$\varepsilon$ is small if at least one limiting vortex is located at
a point of positive Gauss curvature. Furthermore, if one
additionally assumes that ${\cal M}$ is a surface of revolution with
non-zero Gauss curvature at at least one of the poles, then we argue
that all critical points are unstable for $\varepsilon$ small,
regardless of the curvature of the manifold at the limiting vortex
locations (Theorem \ref{thm3}). To prove this, we will apply
Serfaty's abstract result in \cite{S} (see Theorem \ref{S} below),
showing again that the renormalized energy has no stable critical
points on such manifolds. For Ginzburg-Landau posed on a
$2$-manifold, Baraket generalizes the work of \cite{BBH} to identify
the renormalized energy on compact $2$-manifolds without boundary in
\cite{B}. We should perhaps note that for Ginzburg-Landau in
$3$-dimensional domains, there do exist stable vortex solutions
(\cite{MSZ}). This analysis will be presented in Section 2.

   The second setting we consider is the heat flow for the Ginzburg-Landau energy, 
with $\varepsilon=1$, on surfaces of revolution ${\cal M}$ with boundary:
\begin{equation}
                \left\{\begin{array}{ll}
                 u_t - \bigtriangleup _{\cal M} u = (1-|u|^2)u & \mbox{in $\cal M \times \mathbb R_+$}, \\
                 u = e & \mbox{on $\partial \cal M \times \mathbb R_+$}, \\
                 u = u_0 & \mbox{on $\cal M \times $$\{ t = 0 \}. $}
                \end{array} \right. \notag
\end{equation}
Here $u:\cal M \times \mathbb R_+ \rightarrow \mathbb R$$^2$, and
$e$ is any constant unit vector. We allow the compatible initial
data $u_0$ to have any number of vortices though necessarily the
total degree $\sum d_i =0$ in light of the Dirichlet condition. We
want to find conditions on $\cal M$ such that as $t \rightarrow
\infty$, all vortices are annihilated. When ${\cal M} = \mathbb
R^2$, it has been shown in \cite{BCPS} that if $u_0$ is close to $e$
at infinity in some sense, all vortices of $u$ disappear after a
finite time. As in \cite{BCPS}, we will derive a Pohozaev-type
inequality on surfaces (Lemma \ref{Pohozaev}) to prove a  similar
result when $\cal M$ is a simply connected surface of revolution
satisfying an extra geometric assumption that is unrelated to
curvature, see Theorem \ref{KoShin}. This work is presented in
Section 3.

\section{Instability of Critical Points on a Compact Surface}

In this section we take $\cal M$ to be a simply connected compact surface without boundary, 
and $g$ be a metric on $\cal M$. Consider the Ginzburg-Landay energy on $\cal M$,
\begin{equation}\label{E}
    E_\varepsilon(u) = \frac{1}{2} \int_{\cal M} ||\nabla_g u||_g^2 + \frac{(1-|u|^2)^2}{2\varepsilon^2} dv_g
\end{equation}
where $u \in H^1(\cal M, \mathbb C)$. Let $u_\varepsilon$ be the critical point of \eqref{E}, then $u_\varepsilon$ satisfies
\begin{equation} \label{GLM}
    -\bigtriangleup_g u_\varepsilon = \frac{u_\epsilon (1-|u_\epsilon|^2)}{\varepsilon^2} \mbox{ in } \cal M
\end{equation}

In \cite{BBH} where $\cal M$ is a bounded planer domain, Bethuel,
Brezis and H\'{e}lein prove that under an $S^1$-valued Dirichlet
boundary condition, critical points $u_\varepsilon$ of  \eqref{E}
converges to a limiting map $u_*$ strongly in $C^k_{loc}({\cal M}
\setminus \bigcup_{i=1}^{n} a_i)$ for every integer $k$ and in
$C^{1,\alpha}_{loc}(\bar{\cal M} \setminus \bigcup_{i=1}^{n} a_i)$ for
$\alpha <1$ where $\{a_i\}$ is a finite set.  This result has been
generalized to a compact manifold $\cal M$ without boundary, cf.
\cite{B,CS} and has since been refined, see e.g. \cite{JS} and
\cite{SS}. Thus, we have:

\begin{prop}\label{Baraket}
    Let $\{u_\varepsilon\}$ be a sequence of critical points of $E_{\varepsilon}$ 
with $E_\varepsilon(u_\epsilon) \leq C|\log \varepsilon|$ for some constant $C>0$. 
Then up to extraction of a subsequence, there exists a finite set of points 
$a_1,...,a_n$ in $\cal M$ such that $u_\varepsilon \rightarrow u_*$ strongly 
in $W^{1,p}(\cal M)$ for $p<2$ and in $H^1_{loc}$$(\cal M$$ \setminus \bigcup_{i=1}^{n} a_i)$.
\end{prop}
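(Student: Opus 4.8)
The plan is to reduce the problem to the flat Euclidean theory of Bethuel--Brezis--H\'elein via conformal coordinates and then import the vortex-ball lower bounds of Jerrard and Sandier. First I would cover $\cal M$ by finitely many isothermal charts in which the metric takes the form $g = e^{2\phi}(dx^2 + dy^2)$; since $\cal M$ is compact, the conformal factor $e^{2\phi}$ together with its derivatives is uniformly bounded above and below. The key observation is that in two dimensions the Dirichlet integral $\int \|\nabla_g u\|_g^2\, dv_g$ is conformally invariant, so in each chart it equals the flat Dirichlet integral $\int |\nabla u|^2\, dx$; only the potential term $\int \frac{(1-|u|^2)^2}{2\varepsilon^2} e^{2\phi}\, dx$ retains the conformal factor, and there it is comparable to the Euclidean potential with $\varepsilon$ replaced by a locally equivalent parameter. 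This reduction lets me apply the flat local estimates directly in each chart.

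Second, using the energy bound $E_\varepsilon(u_\varepsilon) \leq C|\log \varepsilon|$, I would run the vortex-ball construction in each chart: one covers the set $\{|u_\varepsilon| \leq 1/2\}$ by a finite collection of disjoint balls carrying, by the standard lower bound, at least $\pi |d_i|\,|\log \varepsilon|\,(1 - o(1))$ of energy per ball of degree $d_i$. The upper bound $C|\log \varepsilon|$ then forces the total degree, and hence the number of essential vortex balls, to be bounded independently of $\varepsilon$. After extracting a subsequence I may assume the centers of these balls converge to a finite set $\{a_1, \dots, a_n\} \subset \cal M$.

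Third, away from $\{a_i\}$ I would invoke the $\eta$-ellipticity (clearing-out) lemma: on any compact subset $K \subset \cal M \setminus \bigcup_{i=1}^{n} a_i$ the energy of $u_\varepsilon$ on small balls falls below the threshold that forces $|u_\varepsilon| \geq 1/2$, so $u_\varepsilon$ has no zeros on $K$ for $\varepsilon$ small. Writing $u_\varepsilon = \rho_\varepsilon e^{i\psi_\varepsilon}$ locally and using \eqref{GLM} together with standard elliptic estimates then yields uniform $C^k$ bounds on $K$, giving a limit $u_*$ with $|u_*| = 1$ and strong $H^1_{loc}(\cal M \setminus \bigcup_{i=1}^{n} a_i)$ convergence. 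For the global statement, the limiting map $u_*$ is $S^1$-valued with point singularities at the $a_i$, near which $|\nabla u_*| \sim 1/r$; since $1/r \in L^p$ for $p < 2$ in the plane, $u_* \in W^{1,p}(\cal M)$, and combining the $H^1_{loc}$ convergence with a uniform bound on $\int_{B_\delta(a_i)} |\nabla u_\varepsilon|^p$ coming from the energy estimate upgrades the convergence to strong $W^{1,p}(\cal M)$ for every $p < 2$.

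I expect the main obstacle to be making the vortex-ball lower bounds and the $\eta$-ellipticity lemma genuinely uniform across the finite atlas while controlling the conformal factor: although conformal invariance removes the metric from the Dirichlet term, the chart overlaps and the $\varepsilon$-dependence hidden in the potential must be handled so that the per-vortex energy cost and the clearing-out threshold are independent of the chart. Once this uniformity is secured, the extraction of the finite vortex set and both convergence statements follow from the now-standard flat arguments.
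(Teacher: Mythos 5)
The paper does not actually prove this proposition: it is quoted as a known result, with the proof deferred to \cite{B} and \cite{CS} (see also \cite{JS,SS}), so there is no ``paper's proof'' to match yours against line by line. That said, your outline is essentially the argument those references use --- conformal reduction to the plane, vortex-ball lower bounds against the $C|\log\varepsilon|$ upper bound to extract a finite limiting set, clearing-out plus elliptic estimates for \eqref{GLM} to get locally smooth convergence away from the $a_i$, and a separate argument near the $a_i$ for the global $W^{1,p}$ statement --- so the approach is sound and consistent with the sources the paper relies on.

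Two refinements are worth noting. First, since $\cal M$ is assumed simply connected, the paper uses the Uniformization Theorem to work with a \emph{single} global conformal map $h:{\cal M}\to\mathbb R^2\cup\{\infty\}$ rather than a finite atlas of isothermal charts; this removes precisely the chart-overlap uniformity issue you flag as the main obstacle (at the cost of having to check that no limiting vortex sits at the point sent to $\infty$, which the paper arranges by a choice of $h$). Second, your last step is slightly underargued: the uniform bound on $\int_{B_\delta(a_i)}|\nabla u_\varepsilon|^p$ does not follow from the global bound $E_\varepsilon(u_\varepsilon)\le C|\log\varepsilon|$ alone. One needs the localized upper bound $\tfrac12\int_{B_r(a_i)}|\nabla u_\varepsilon|^2\le \pi d_i^2\log(r/\varepsilon)+C$, which for critical points comes from a Pohozaev-type identity and comparison arguments as in \cite{BBH}, and then a H\"older estimate on dyadic annuli $B_{2^{-k}}\setminus B_{2^{-k-1}}$ summed as a geometric series for $p<2$. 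This is standard for solutions of \eqref{GLM} but is exactly the place where the hypothesis that the $u_\varepsilon$ are critical points (and not merely energy-bounded maps) enters, so it should be named rather than attributed to ``the energy estimate.''
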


We will refer to these points $a_1,...,a_n$ as limiting vortices associates with the 
sequence $\{u_\varepsilon\}$.The same result holds on a compact manifold with 
modifications, see Proposition 5.5 in \cite{CS}.

From the Uniformization Theorem, there is a conformal map 
$h: \cal M \rightarrow \mathbb R$$^2 \bigcup\{\infty \}$, so that the metric $g$ is given 
by $e^{2f}(dx_1^2 + dx_2^2)$ for some smooth function $f$. We recall that 
$\bigtriangleup f = -K_{\cal M}e^{2f}$, where $K_{\cal M}$ is the Gauss curvature on 
$\cal M$. Then for $U_{\varepsilon} := u_{\varepsilon} \circ h^{-1}$, \eqref{GLM} transforms to
\begin{equation}\label{GL}
    -\bigtriangleup U_\varepsilon = \frac{e^{2f}}{\varepsilon^2} U_\varepsilon(1-|U_\varepsilon|^2) \mbox{ in } \mathbb R^2.
\end{equation}
We may assume that $h (a_i) \neq \infty$ for all $i$ and denote $b_i := h(a_i)$. 
With a slight abuse of terminology, we will also call the $b_i$'s limiting vortices. 
Then $u_*$ is the harmonic map associated to $(b_i,d_i)$:
\begin{equation}
    u_*(x) = \prod_{i=1}^n (\frac{x-b_i}{|x-b_i|})^{d_i}e^{i\psi} \mbox{ in } \mathbb R^2,
\end{equation}
where $d_i \in \mathbb Z \setminus \{0\}$, $\sum_{i=1}^n d_i =0$,
and $\psi$ is a smooth harmonic function. We also note that the notion 
of convergence linking $\{u_\varepsilon\}$ to 
$\{a_i\}$ is that of convergence of the sequence of Jacobians, namely
\[
curl (iU_\varepsilon, \nabla U_\varepsilon) \rightharpoonup 2\pi 
\sum_{i=1}^n d_i \delta_{b_i}
\] 
in the sense of distributions, where $(\cdot, \cdot)$ denotes the scalar product in $\mathbb C$. 
For Euclidean domains, the proof of this convergence of Jacobians can be found 
in \cite{JS,SS} and the adaptation to the manifold setting is immediate. Moreover, the 
renormalized energy can be defined in the following way:

    Given $\{a_i\}_{i=1}^n \subset \cal M$, let $B_i^g(r)$ be the geodesic ball in $\cal M$ 
centered at $a_i$ with radius $r$, and $B_i(r) = h(B_i^g(r)) \subset \mathbb R^2$. 
Consider $\Phi_r$ which satisfies
\begin{equation}\label{25}
    \left\{\begin{array}{ll}
        \bigtriangleup \Phi_r = 0 & \mbox{in } \mathbb R^2 \setminus \bigcup_{i=1}^n B_i(r)\\
        \Phi_r = const. & \mbox{on each } \partial B_i(r)\\
        \int_{\partial B_i(r)} \frac{\partial \Phi_r}{\partial \nu} = 2\pi d_i & \mbox{for }1 \leq i \leq n.
    \end{array} \right.
\end{equation}
To see the existence of such a solution $\Phi_r$, we first consider a functional
\[
    E(v) = \int_{\cal M} |\nabla_g v|^2 - 2\pi \sum_{i=1}^n d_i v|_{\partial B_i^g(r)}
\]
defined for $v \in H^1(\cal M)$ such that $v|_{\partial B_i^g(r)} = const.$ for each $i$. 
Using the direct method one can show that there exists a minimizer $v_*$ of $E$. 
Then since $g$ is a conformal metric, $v_* \circ h^{-1}$ satisfies \eqref{25}. 
The renormalized energy is defined by
\begin{equation}
    W(\mathbf b) = \lim_{r \rightarrow 0} \frac{1}{2} \int_{\mathbb R^2 \setminus \bigcup_{i=1}^n B_i(r)} |\nabla \Phi_r|^2 dx - \pi \sum_{i=1}^n d_i^2 \log \frac{1}{r},
\end{equation}
where $\mathbf b = (b_1,...,b_n)$.

Finally, using the fact that $\Phi_r(x) \approx \sum_{i=1}^n d_i \log |x-b_i|$ for 
$x \in \mathbb R^2 \setminus \bigcup_{i=1}^n B_i(r)$ when $r \ll 1$, Theorem 2.1 in \cite{B} 
establishes that $W$ can be written as
\begin{equation} \label{W}
    W(\mathbf b) = \pi \sum_{i=1}^{n} d_i^2 f(b_i) - \pi \sum_{i \neq j} d_i d_j \log|b_i-b_j|.
\end{equation}

Our first main result is the following:
\begin{thm}\label{main}
    Let  $\{u_\varepsilon \}$ be a family of solutions to \eqref{GL} such that $E_\varepsilon (u_\varepsilon) \leq C |\log \varepsilon|$, and let $\{a_i\}_{i=1}^n \subset \cal M$ be the limiting vortices for $\{u_\varepsilon\}$. Suppose there exists an  $a_i$ such that the Gauss curvature $K_{\cal M}$ is positive at $a_i$. Then for $\varepsilon$ small enough, $u_\varepsilon$ is unstable.
\end{thm}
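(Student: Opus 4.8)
The plan is to transfer instability from the renormalized energy $W$ to the Ginzburg-Landau functional $E_\varepsilon$ by means of Serfaty's abstract result (Theorem \ref{S}). The heart of the matter is a short computation showing that, whenever a vortex of nonzero degree sits at a point of positive Gauss curvature, the Hessian of $W$ at the configuration $\mathbf b$ has a strictly negative direction. This forbids $\mathbf b$ from being a stable critical point of $W$, and the abstract result then rules out stability of $u_\varepsilon$ for $\varepsilon$ small.

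First I would argue by contradiction: suppose $u_\varepsilon$ is stable along a sequence $\varepsilon \to 0$. By Serfaty's $C^2$-$\Gamma$-convergence machinery, the limiting vortex configuration $\mathbf b = (b_1, \dots, b_n)$ must then be a stable critical point of $W$; in particular the Hessian $\mathrm{Hess}\, W(\mathbf b)$ is positive semidefinite. It therefore suffices to exhibit a single direction along which the second variation of $W$ is negative, which I obtain by examining the $2 \times 2$ diagonal block of $\mathrm{Hess}\, W$ attached to the vortex $b_i$ that lies where $K_{\cal M} > 0$.

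Next I would compute the trace of that block, i.e. the Laplacian of $W$ in the two coordinates of $b_i$. Writing $W$ as in \eqref{W}, the interaction terms $-\pi \sum_{k \neq l} d_k d_l \log|b_k - b_l|$ are harmonic in $b_i$ away from the other vortices, so they contribute nothing to the $b_i$-Laplacian. Hence, using the identity $\Delta f = -K_{\cal M}\,e^{2f}$,
\[
\Delta_{b_i} W = \pi d_i^2 \, (\Delta f)(b_i) = -\pi d_i^2 \, K_{\cal M}(a_i)\, e^{2f(b_i)}.
\]
Since $d_i \neq 0$ and $K_{\cal M}(a_i) > 0$, the right-hand side is strictly negative, so the $b_i$-block has negative trace and therefore possesses a negative eigenvalue. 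Extending the corresponding eigenvector by zero in the remaining slots produces a direction $\tilde v$ with $\tilde v^{T}\, \mathrm{Hess}\, W(\mathbf b)\, \tilde v < 0$, contradicting positive semidefiniteness. This contradiction forces $u_\varepsilon$ to be unstable for $\varepsilon$ small.

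The main obstacle is not this computation, which is essentially one line, but the careful invocation of Serfaty's abstract result. One must verify that its hypotheses, originally formulated for planar domains with Neumann conditions, survive the passage to the conformal manifold setting, where the weight $e^{2f}$ enters \eqref{GL} and the self-interaction term $\pi d_i^2 f(b_i)$ replaces the flat renormalized energy. I expect this to reduce to confirming that $E_\varepsilon$ $\Gamma$-converges to $W$ in the required $C^2$ sense on $\cal M$, which follows from the expansion $\Phi_r(x) \approx \sum_i d_i \log|x - b_i|$ and the construction in \cite{B}; the conformal factor is smooth and bounded away from the vortices, so it does not disturb the local vortex profiles that drive the argument.
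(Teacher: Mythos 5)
Your proposal is correct and follows essentially the same route as the paper: reduce to the Hessian of $W$ via Serfaty's Theorem \ref{S} (whose hypotheses the paper verifies in its Proposition 2.2), then observe that the logarithmic interaction terms are harmonic in $b_i$, so the trace of the corresponding $2\times 2$ block equals $\pi d_i^2\,\Delta f(b_i) = -\pi d_i^2 K_{\cal M} e^{2f}(b_i) < 0$, forcing a negative eigenvalue and contradicting stability.
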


\begin{rk}
        Theorem \ref{main} implies that if $\cal M$ has positive curvature 
    everywhere, then there is no stable solution to \eqref{GL} having vortices for
    $\varepsilon$ small enough. Moreover, in this case any solution without any 
    vortices must then be a constant (see Lemma 3.2 in the next section).
    For the special case where $\cal M=S^2$, this instability result
    was first obtained by Contreras, \cite{C}
\end{rk}

\begin{rk}[The Apple Problem]
        If one wants to look for an example of a stable nonconstant critical point, 
    one might consider a surface of revolution $\cal A$ obtained by rotating a 
    smooth curve $\Gamma$ about the $z$-axis shown in Figure 1, so that the 
    shape of $\cal A$ is like an apple. Indeed, one can easily construct a critical 
    point with vortices at $S$ and $N$ (cf. \cite{CS}), but it cannot be stable in 
    view of Theorem \ref{main}, since $K_{\cal A}$ is positive at poles $N$ and $S$. 
    We note that for the 3-D case (solid apple in $\mathbb R^3$), it has been 
    proven in \cite{MSZ} that a critical point with a vortex line through $S$ 
    and $N$ is a local minimizer for $\varepsilon$ small enough.
\end{rk}

\begin{figure}[h]
\graphicspath{{words/}}
\centering
\includegraphics[scale=0.35]{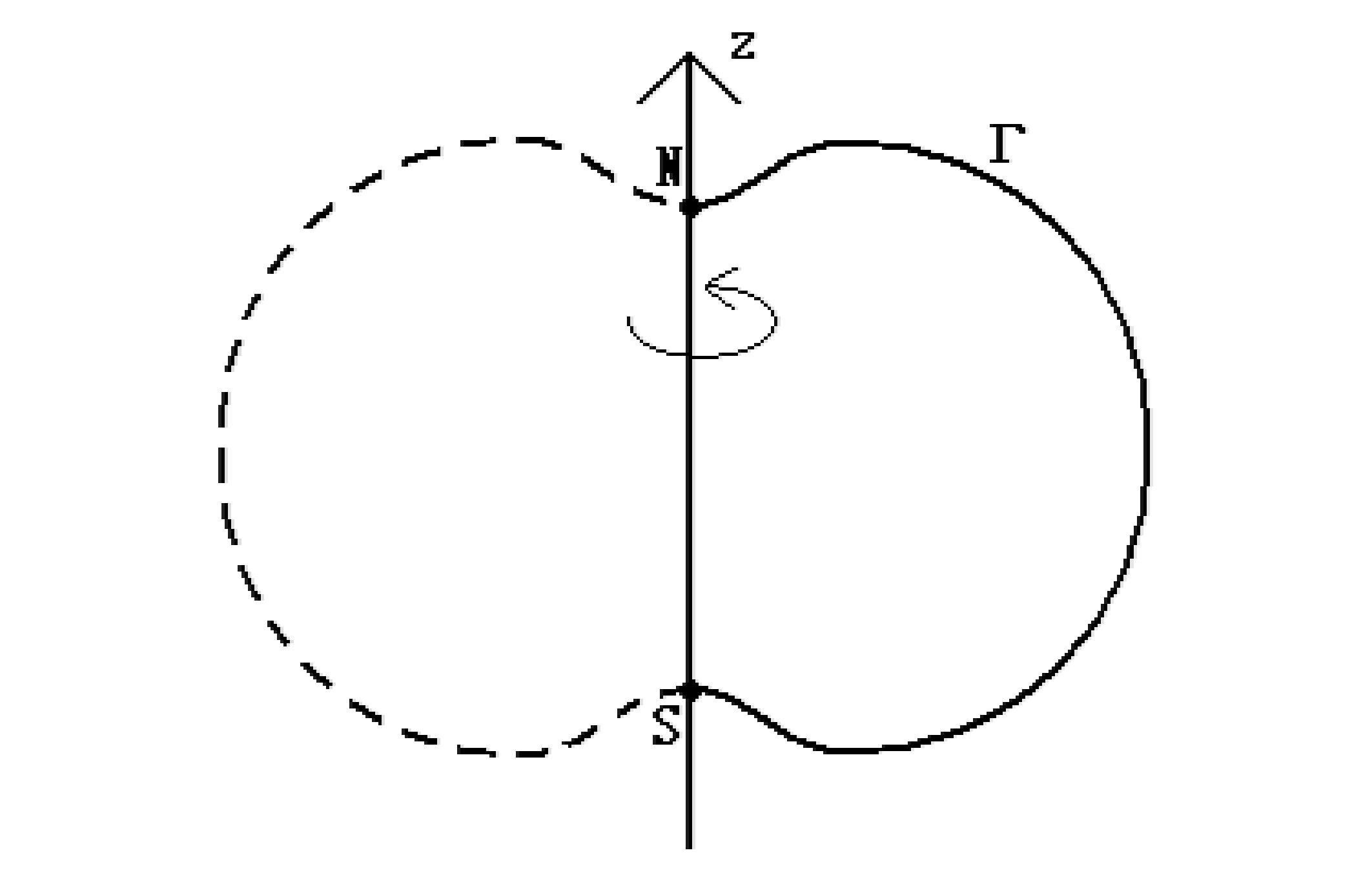}
\caption{Surface of revolution $\cal A$}
\label{eg}
\end{figure}

To prove Theorem \ref{main}, our main tool will be Serfaty's abstract 
result in \cite{S} for any $C^2$ functionals $F_\varepsilon$ (resp. $F$) 
defined over $\cal S$ (resp. $\cal S'$), which is an open set of an affine 
space associated to a Banach space $\cal B$ (resp. $\cal B'$) satisfying a 
kind of \textquotedblleft $C^2$ $\Gamma$-convergence". 
Let $u_\varepsilon \in \cal S$ be a family of critical points of 
$F_\varepsilon$. Assume $u_\varepsilon$ converges to $u \in S'$ in 
some topology. Then denoting by $n_\varepsilon^-$ (resp. $n^-$) the 
dimension of the space spanned by eigenvectors of 
$D^2F_\varepsilon(u_\varepsilon)$ defined over $\cal B$ (resp. $D^2F(u)$ 
defined over $\cal B'$) associated to negative eigenvalues, the theorem states

\begin{thm}[\cite{S}]\label{S}
    Suppose that for any $V \in \cal B'$, there exists $v_\varepsilon(t) \in \cal S$ defined in a neighborhood of $t=0$ such that
    \begin{equation}\label{h1}
        v_\varepsilon(0) = u_\varepsilon,
    \end{equation}
    \begin{equation}\label{h2}
        \partial_t v_\varepsilon(0) \mbox{ is a one-to-one linear map on } \cal B',
    \end{equation}
    \begin{equation}\label{h3}
        \lim_{\varepsilon \rightarrow 0} \frac{d}{dt}|_{t=0}F_\varepsilon(v_\varepsilon(t)) = \frac{d}{dt}|_{t=0}F(u+tV),
    \end{equation}
    \begin{equation}\label{h4}
        \lim_{\varepsilon \rightarrow 0} \frac{d^2}{dt^2}|_{t=0}F_\varepsilon(v_\varepsilon(t)) = \frac{d^2}{dt^2}|_{t=0}F(u+tV).
    \end{equation}
    Then for $\varepsilon$ small enough, we have $n_\varepsilon^- \geq n^-$.
\end{thm}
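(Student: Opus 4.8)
The plan is to read $n^-$ as the maximal dimension of a subspace of $\mathcal{B}'$ on which the quadratic form $Q(V):=\frac{d^2}{dt^2}\big|_{t=0}F(u+tV)=D^2F(u)[V,V]$ is negative definite, and then to manufacture, for $\varepsilon$ small, a subspace of $\mathcal{B}$ of the same dimension $n^-$ on which $D^2F_\varepsilon(u_\varepsilon)$ is negative definite; by the min–max characterization of the Morse index this forces $n_\varepsilon^-\ge n^-$. The starting observation is that because each $u_\varepsilon$ is a critical point of $F_\varepsilon$, for any admissible curve $v_\varepsilon(t)\in\mathcal{S}$ with $v_\varepsilon(0)=u_\varepsilon$ the chain rule gives $\frac{d^2}{dt^2}\big|_{t=0}F_\varepsilon(v_\varepsilon(t))=D^2F_\varepsilon(u_\varepsilon)[\dot v_\varepsilon(0),\dot v_\varepsilon(0)]$, since the term $DF_\varepsilon(u_\varepsilon)[\ddot v_\varepsilon(0)]$ vanishes at a critical point. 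Likewise \eqref{h3} forces $DF(u)[V]=0$ for all $V$, so $u$ is a critical point of $F$ and $Q$ really is its second variation.

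Next I would exploit hypothesis \eqref{h2}: interpreting $v_\varepsilon(t)$ as depending on the chosen direction $V$, the assignment $L_\varepsilon:V\mapsto\partial_t v_\varepsilon(0;V)$ is a one-to-one linear map from $\mathcal{B}'$ into $\mathcal{B}$. Combining this with the reduction above yields $\frac{d^2}{dt^2}\big|_{t=0}F_\varepsilon(v_\varepsilon(t;V))=D^2F_\varepsilon(u_\varepsilon)[L_\varepsilon V,L_\varepsilon V]=:q_\varepsilon(V)$, and \eqref{h4} says precisely that $q_\varepsilon(V)\to Q(V)$ for every $V\in\mathcal{B}'$. Polarizing over sums $V_i+V_j$ and using the linearity of $L_\varepsilon$, the associated symmetric bilinear forms converge as well, so on any fixed finite-dimensional subspace the Gram matrix of $q_\varepsilon$ converges entrywise to that of $Q$.

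To conclude, I would fix an $n^-$-dimensional subspace $P\subset\mathcal{B}'$ on which $Q$ is negative definite, say $Q(V)\le -c\|V\|^2$ on $P$ with $c>0$. Entrywise convergence of the forms on the finite-dimensional $P$ upgrades to uniform convergence of $q_\varepsilon$ to $Q$ on the compact unit sphere of $P$, so for $\varepsilon$ small enough $q_\varepsilon(V)\le -\tfrac{c}{2}\|V\|^2$ on $P$; that is, $D^2F_\varepsilon(u_\varepsilon)$ is negative definite on $L_\varepsilon(P)$. Since $L_\varepsilon$ is a linear injection, $L_\varepsilon(P)\subset\mathcal{B}$ has dimension $n^-$, and the min–max characterization of the number of negative eigenvalues then gives $n_\varepsilon^-\ge\dim L_\varepsilon(P)=n^-$.

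The main obstacle is not any single estimate but making the abstract bookkeeping rigorous. One must justify the identification of $\partial_t v_\varepsilon(0)$ with a genuine linear injection $L_\varepsilon$, so that the pushed-forward family $L_\varepsilon(P)$ is honestly $n^-$-dimensional inside $\mathcal{B}$; and one must ensure that $n_\varepsilon^-$ and $n^-$, defined via eigenvectors of $D^2F_\varepsilon(u_\varepsilon)$ and $D^2F(u)$, coincide with the maximal–negative-definite-subspace count used here, which requires the Hessians to be self-adjoint with the negative part of their spectrum isolated (as holds for second variations of Ginzburg–Landau-type energies). Finally, the passage from the pointwise convergence in \eqref{h4} to uniform control is legitimate only because we have restricted to the finite-dimensional $P$; arguing on all of $\mathcal{B}'$ at once would be where the compactness, and hence the proof, breaks down.
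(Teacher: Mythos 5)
The paper does not prove this theorem at all: it is quoted verbatim from Serfaty \cite{S} and used as a black box, so there is no in-paper proof to compare against. Your argument is essentially the one in \cite{S} itself --- reduce $\frac{d^2}{dt^2}\big|_{t=0}F_\varepsilon(v_\varepsilon(t))$ to $D^2F_\varepsilon(u_\varepsilon)[\partial_t v_\varepsilon(0),\partial_t v_\varepsilon(0)]$ using criticality of $u_\varepsilon$, push a maximal negative-definite subspace $P$ forward by the injection $L_\varepsilon$, upgrade the pointwise convergence of \eqref{h4} to uniform convergence on the unit sphere of the finite-dimensional $P$ via polarization, and conclude by min--max --- and it is correct modulo the spectral caveats (self-adjointness, identification of the eigenvector count with the maximal negative-definite dimension) that you already flag and that \cite{S} handles by hypothesis.
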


In the same paper, she applies this result to Ginzburg-Landau 
in bounded domains in $\mathbb R^2$ with homogeneous 
Neumann boundary conditions. In a similar manner, to prove 
Theorem \ref{main}, we apply this approach to Ginzburg-Landau 
on surfaces. That is, using the same notation as above we shall prove

\begin{prop}
    Let $u_\varepsilon$ be a family of critical points of $E_\varepsilon$ 
    such that $E_\varepsilon(u_\varepsilon) \leq C|\log \varepsilon|$, 
    and $b_1, b_2, ..., b_n$ be limiting vortices with total degree zero.
    Then hypotheses \eqref{h1} to \eqref{h4} in Theorem \ref{S} hold 
    for $F_\varepsilon = E_\varepsilon$, $F=W$ and 
    $\cal B' = $$\mathbb V$$= \{ (V_1, V_2, ..., V_n):  
    V_i \in {\mathbb R^2} \; \forall 1 \leq i \leq n \}$.
\end{prop}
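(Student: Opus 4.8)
The plan is to verify the four hypotheses of Theorem~\ref{S} by constructing, for each target direction $V = (V_1, \dots, V_n) \in \mathbb{V}$, a suitable one-parameter family $v_\varepsilon(t)$ of configurations in $\cal S$ that moves the vortices of $u_\varepsilon$ in the prescribed directions $V_i$. First I would build $v_\varepsilon(t)$ by applying a $t$-dependent diffeomorphism $\Psi_t$ of the plane (equivalently of $\cal M$ via $h$) that translates a neighborhood of each $b_i$ by $t V_i$ while equaling the identity outside a fixed compact region away from all vortices, and setting $v_\varepsilon(t) := u_\varepsilon \circ \Psi_t^{-1}$ (pulled back to $\cal M$). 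With $\Psi_0 = \mathrm{id}$ this gives \eqref{h1} immediately. For \eqref{h2}, since $\partial_t \Psi_t|_{t=0}$ is the compactly supported vector field equal to $V_i$ near each $b_i$, the induced map $\partial_t v_\varepsilon(0)$ is injective on $\mathbb{V}$ as long as the supports of the bump regions around distinct vortices are disjoint, which holds once $t$ and the neighborhoods are small.

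The core of the argument is hypotheses \eqref{h3} and \eqref{h4}, the convergence of the first and second $t$-derivatives of $E_\varepsilon(v_\varepsilon(t))$ to those of the renormalized energy $W$. The plan is to compute $\frac{d}{dt} E_\varepsilon(v_\varepsilon(t))$ and $\frac{d^2}{dt^2} E_\varepsilon(v_\varepsilon(t))$ at $t=0$ by differentiating under the integral sign, using the change of variables $x \mapsto \Psi_t(x)$ to convert the $t$-dependence of $v_\varepsilon$ into $t$-dependence of the (conformally weighted) metric and volume form. This produces expressions involving the stress-energy tensor of $u_\varepsilon$ integrated against the deformation field $V$; the potential term $(1-|u_\varepsilon|^2)^2/2\varepsilon^2$ contributes only through the volume-form variation, not through any $\varepsilon^{-2}$ blow-up, since $\Psi_t$ is volume-structured away from the cores. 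I would then split the domain into the punctured region $\mathbb{R}^2 \setminus \bigcup_i B_i(r)$ and the small balls $B_i(r)$, pass to the limit $\varepsilon \to 0$ using the strong $H^1_{loc}$ convergence $u_\varepsilon \to u_*$ away from the vortices (Proposition~\ref{Baraket}) on the exterior region, and control the inner contributions via the vortex-core energy expansion.

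The main obstacle, as expected, will be the logarithmically divergent self-energy of each vortex: both $E_\varepsilon(v_\varepsilon(t))$ and the corresponding exterior integral defining $W$ diverge like $\pi \sum_i d_i^2 \log(1/r)$ as $r \to 0$, so the limit in $t$-derivatives only makes sense after this divergent piece is isolated. The key point is that the divergent self-energy is \emph{invariant} under the vortex-translation $\Psi_t$ to leading order — moving a vortex rigidly does not change its core energy — so the divergent term is independent of $t$ to the order we differentiate, and its $t$-derivatives vanish. Concretely, I would show that $\frac{d}{dt}|_{t=0}$ and $\frac{d^2}{dt^2}|_{t=0}$ of the $\log(1/r)$ term are $O(r)$ or vanish, so the divergence cancels and the finite remainder converges to the derivatives of $W(\mathbf{b} + t\mathbf{V})$. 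Matching these finite limits term-by-term against the explicit formula \eqref{W} for $W$, in particular confirming that the conformal factor $f$ enters through the $\pi \sum_i d_i^2 f(b_i)$ term exactly as the volume-form variation dictates, yields \eqref{h3} and \eqref{h4} and completes the verification. Once the proposition is established, Theorem~\ref{S} gives $n_\varepsilon^- \geq n^-$, and it remains (for Theorem~\ref{main}) to exhibit an unstable direction for $W$, namely a direction decreasing $W$ that arises from the positivity of $f$'s curvature contribution at a vortex where $K_{\cal M} > 0$.
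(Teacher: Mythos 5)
Your overall strategy (a compactly supported diffeomorphism translating each vortex by $tV_i$, change of variables, splitting into exterior region plus small balls, passing to the limit via $H^1_{loc}$ convergence, and isolating the $t$-independent logarithmic core energy) matches the paper's outline, but your construction $v_\varepsilon(t) = u_\varepsilon \circ \Psi_t^{-1}$ omits the essential phase correction. The paper, following Serfaty, sets $v_\varepsilon(\chi_t(x),t) = u_\varepsilon(x)e^{i\psi_t(x)}$ with $\psi_t = \sum_i d_i\,\theta_t^i\circ\chi_t - \sum_i d_i\,\theta_0^i$, chosen precisely so that $\nabla^\perp\Phi_0 + \nabla\psi_t = \nabla\bigl(\sum_i d_i\,\theta_t^i\circ\chi_t\bigr)$, i.e.\ so that after the change of variables the gradient energy density in the exterior region becomes exactly $|\nabla^\perp\Phi_{0,t}|^2$, the density of the canonical harmonic map for the \emph{displaced} vortices. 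Without this correction, the pure pushforward drags the phase of $u_\varepsilon$ along $\chi_t$ in the transition annuli and is therefore not asymptotically optimal for the configuration $\mathbf b(t)$: writing $h(t)$ for the excess of the pushforward energy over the renormalized energy of $\mathbf b(t)$ (plus core energies), one has $h\ge 0$ with $h(0)=0$, so while $h'(0)=0$ may rescue \eqref{h3}, one only gets $h''(0)\ge 0$, and generically $h''(0)>0$. Thus the limit of $\frac{d^2}{dt^2}\big|_{t=0}E_\varepsilon(v_\varepsilon(t))$ would exceed $\frac{d^2}{dt^2}\big|_{t=0}W(\mathbf b+t\mathbf V)$, violating the exact equality demanded by \eqref{h4}. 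This is fatal for the application: the whole point is to transfer negative directions of $D^2W$ to $D^2E_\varepsilon$, and an overestimate of the limiting second derivative can turn a negative direction into a nonnegative one.

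A secondary gap: your claim that the potential term contributes ``only through the volume-form variation'' does not by itself control $\frac{d}{dt}\big|_{t=0}$ of $\int \frac{e^{2f(\chi_t)}}{4\varepsilon^2}(1-|u_\varepsilon|^2)^2|{\rm Jac}\,\chi_t|\,dx$, because the integrand carries the factor $\varepsilon^{-2}$ over the transition region $K\setminus\bigcup_i B_i$ and the global energy bound only gives $O(|\log\varepsilon|)$ there. The paper invokes the concentration result (Lemma 3.2 of \cite{B}) that $\frac{e^{2f}}{\varepsilon}(1-|u_\varepsilon|^2)^2$ converges to a measure supported on the vortices, which is what forces these derivative terms to vanish as $\varepsilon\to 0$; you need this (or an equivalent refined estimate away from the vortex cores) explicitly.
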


\begin{proof}
    Let $u_\varepsilon$ be a family of critical points of 
$E_\varepsilon$ such that $E_\varepsilon(u_\varepsilon) \leq C|\log \varepsilon|$. 
Then from results in \cite{B} (see also \cite{CS}), there exists $\rho >0$ 
small enough  such that $B_i^g(\rho)$ are disjoint with 
$|u_\varepsilon| \geq \frac{1}{2}$ in $\cal M \setminus $$\bigcup_{i=1}^n$
$B_i^g(\rho)$ for $\varepsilon$ small enough.

    The construction of $v_\varepsilon (t)$ is based on Propsition III.1 
in \cite{S}. Let $B_i = B_i(\rho) = h(B_i^g(\rho))$. For a given 
$\mathbf V \in \mathbb V$, we can define a $C^1$ family of diffeomorphisms 
of $\mathbb R^2$, $\chi_t(x) = x + t\mathbf X(x)$, in a neighborhood of $t=0$ 
such that $\mathbf X$ has compact support in a set 
$K \subset \subset \mathbb R^2$ and
    \[
        \mathbf X(x) = V_i \mbox{ in each } B_i.
    \]
    Then we define $\Phi_{0,t}$ by
    \begin{equation}
        \bigtriangleup \Phi_{0,t} = 2\pi \sum_{i=1}^{n}d_i\delta_{b_i(t)} \mbox{ in } \mathbb R^2,
    \end{equation}
so that $\Phi_{0,t}(x) = \sum_{i=1}^n d_i \log |x - b_i(t)|$, and let 
$\Phi_{0,0} = \Phi_0$, where $b_i(t) = \chi_t(b_i)$. Then we denote 
by $\theta_t^i$ the polar coordinate centered at $b_i(t)$, and let
    \[
         \psi_t = \sum_{i=1}^{n} d_i \theta_t^i \circ \chi_t - \sum_{i=1}^{n} d_i \theta_0^i.
    \]
    Then we have
    \begin{equation}\label{*}
        \nabla ^\perp \Phi_0 + \nabla \psi_t = \nabla (\sum_{i=1}^n d_i \theta_t^j \circ \chi_t).
    \end{equation}
    Finally we define $v_\varepsilon(\chi_t(x),t) = u_\varepsilon(x)e^{i\psi_t(x)}$. 
With the same argument as in \cite{S}, one can show that \eqref{h1} 
and \eqref{h2} hold for $v_\varepsilon$. Since $\mathbf X$ is 
compactly supported, the domain of integration reduces from 
$\mathbb R^2$ to a compact set. Consequently, the result of 
product-estimate  derived in \cite{S2} used in the original proof 
can be also applied in our case. Therefore we proceed to verify \eqref{h3}.

    By the change of variables $y = \chi_t(x)$, we have
    \begin{align}
        E_\varepsilon(v_\varepsilon) & = \frac{1}{2} \int_{\mathbb R^2} |\nabla v_\varepsilon(y)|^2 +\frac{e^{2f(y)}}{2 \varepsilon^2} (1-|v_\varepsilon|^2)^2 dy \notag \\
                                            & = \frac{1}{2} \int_{\mathbb R^2} |\nabla (u_\varepsilon e^{i \psi_t})(D \chi_t)^{-1}|^2 +\frac{e^{2f(\chi_t)}}{2 \varepsilon^2} (1-|u_\varepsilon|^2)^2 |Jac \; \chi_t| dx.
    \end{align}

    Noting that $\chi_t$ is the identity map in $\mathbb R^2 \setminus K$ 
and a translation along a constant vector $V_i$ in each $B_i$, we deduce 
that in $\mathbb R^2 \setminus K$ and $\bigcup_{i=1}^n B_i$,
    \begin{equation}\label{Jac}
         \frac{d}{dt} (D\chi_t)^{-1} = \frac{d^2}{dt^2}  (D\chi_t)^{-1} = \frac{d}{dt} |Jac \; \chi_t| = \frac{d^2}{dt^2} |Jac \; \chi_t| = 0.
    \end{equation}

    \begin{flushleft}
    \bf{-Derivative of the potential term:}
    \end{flushleft}
    From \eqref{Jac}, we derive
     \begin{align}
        \frac{d}{dt}|_{t=0} & \int_{\mathbb R^2} \frac{e^{2f(\chi_t)}}{4\varepsilon^2} (1-|u_\varepsilon^2|)^2 |Jac \; \chi_t| dx \notag \\
                                                                   = &\int_{K \setminus \bigcup_{i=1}^n B_i}  \frac{e^{2f(\chi_t)}}{2\varepsilon^2} (1-|u_\varepsilon^2|)^2 \; \nabla f \cdot \mathbf X dx \notag \\
                                                                      & + \int_{K \setminus \bigcup_{i=1}^n B_i}  \frac{e^{2f(\chi_t)}}{2\varepsilon^2} (1-|u_\varepsilon^2|)^2 \frac{d}{dt}|_{t=0}|Jac \; \chi_t| dx
     \end{align}
    Since $\nabla f \cdot \mathbf X$ and 
$\frac{d}{dt}|_{t=0}|Jac \; \chi_t|$ are bounded in 
$K \setminus \bigcup_{i=1}^n B_i$, one can apply Lemma 3.2 
in \cite{B} which asserts that 
$\frac{e^{2f}}{\varepsilon}(1-|u_\varepsilon|^2)^2$ converges 
to a measure supported on $\bigcup_{i=1}^n b_i$. Hence, we have
    \begin{equation}
         \frac{d}{dt}|_{t=0} \int_{\mathbb R^2} \frac{e^{2f(\chi_t)}}{4\varepsilon^2} (1-|u_\varepsilon^2|)^2 |Jac \; \chi_t| dx \rightarrow 0 \mbox{ as } \varepsilon \rightarrow 0.
    \end{equation}
    Similarly,
    \begin{equation}
         \frac{d^2}{dt^2}|_{t=0} \int_{\mathbb R^2} \frac{e^{2f(\chi_t)}}{4\varepsilon^2} (1-|u_\varepsilon^2|)^2 |Jac \; \chi_t| dx \rightarrow 0 \mbox{ as } \varepsilon \rightarrow 0.
    \end{equation}

    \begin{flushleft}
    \bf{-Derivative of the gradient term:}
    \end{flushleft}
    Using \eqref{Jac} we have

    \begin{align}
        \frac{d}{dt}|_{t=0} \frac{1}{2} & \int_{\mathbb R^2} |\nabla (u_\varepsilon e^{i \psi_t})(D \chi_t)^{-1}|^2 |Jac \; \chi_t| dx \notag \\
                                   =  &\int_K iu_\varepsilon \frac{d}{dt}|_{t=0} \nabla \psi_t \cdot \nabla u_\varepsilon + \nabla u_\varepsilon \frac{d}{dt}|_{t=0} (D \chi_t)^{-1} \cdot \nabla u_\varepsilon dx \notag \\
                                       & + \frac{1}{2} \int_K |\nabla u_\varepsilon|^2 \frac{d}{dt}|_{t=0} |Jac \; \chi_t| dx.
    \end{align}
Theorem 2 in \cite{B} asserts that $u_\varepsilon$ 
converges to $u_*$ in $H^1_{loc}({\mathbb R^2} \setminus \bigcup_{i=1}^{n} b_i)$. Moreover, $\nabla u_* = \nabla^{\perp}\Phi_0$. Thus we obtain
    \begin{align}
        \frac{d}{dt}|_{t=0} E_\varepsilon (v_\varepsilon) = & \int_K \nabla ^{\perp} \Phi_0 \frac{d}{dt}|_{t=0}(D \chi_t)^{-1} \cdot  \nabla ^{\perp} \Phi_0 + \frac{d}{dt}|_{t=0} \nabla \psi_t \cdot \nabla u_\varepsilon dx \notag \\
                                                                               & + \int_K |\nabla ^{\perp} \Phi_0|^2 \frac{d}{dt}|_{t=0} |Jac \; \chi_t| dx + o_\varepsilon(1).
    \end{align}
    Using \eqref{Jac} again  and the change of variables $x = \chi_t^{-1}(y)$, for any $0<r<\rho$, we have
    \begin{align}
        \frac{d}{dt}|_{t=0} E_\varepsilon (v_\varepsilon) = & \frac{d}{dt}|_{t=0} \frac{1}{2} \int_{\mathbb R^2 \setminus \bigcup_{i=1}^n B_i(r)} |(\nabla ^{\perp} \Phi_0 + \nabla \psi_t)(D \chi_t)^{-1}|^2 |Jac \; \chi_t| dx + o_\varepsilon(1) \notag \\
                                                                           = & \frac{d}{dt}|_{t=0} \frac{1}{2} \int_{\mathbb R^2 \setminus \bigcup_{i=1}^n B_i(t,r)} |\nabla ^{\perp} \Phi_{0,t}|^2dy + o_\varepsilon(1).
    \end{align}
where $B_i(t,r) = \chi_t(B_i(r))$. The last equality comes from \eqref{*}. Next, define $\Phi_{r,t}$ by
    \begin{equation}
        \left\{\begin{array}{ll}
            \bigtriangleup \Phi_{r,t} = 0 & \mbox{in } \mathbb R^2 \setminus \bigcup_{i=1}^n B_i(t,r)\\
            \Phi_{r,t} = const. & \mbox{on each } \partial B_i(t,r)\\
            \int_{\partial B_i(t,r)} \frac{\partial \Phi_{r,t}}{\partial \nu} = 2\pi d_i & \mbox{for }1 \leq i \leq n.
        \end{array} \right.
    \end{equation}
    From Lemma 2.2 in \cite{B} and elliptic estimates, we have
    \begin{equation}
        \int_{\mathbb R^2 \setminus \bigcup_{i=1}^n B_i(t,r)} |\nabla \Phi_{0,t}|^2dx = \int_{\mathbb R^2 \setminus \bigcup_{i=1}^n B_i(t,r)} |\nabla \Phi_{r,t}|^2dx + o_r(1).
    \end{equation}
    Then by the definition of $W$, we obtain
     \begin{align}
        \frac{d}{dt}|_{t=0} E_\varepsilon (v_\varepsilon) & =  \frac{d}{dt}|_{t=0} \lim_{r \rightarrow 0} \frac{1}{2} \int_{\mathbb R^2 \setminus \bigcup_{i=1}^n B_i(t,r)} |\nabla \Phi_{r,t}|^2dx + o_\varepsilon(1) \notag \\
                                                                            & =  \frac{d}{dt}|_{t=0} W(\mathbf b(t)) + o_\varepsilon(1),
    \end{align}
    hence the desired result \eqref{h3}.

    The verification of \eqref{h4} is again analogous to 
the argument found in \cite{S} with appropriate adjustments 
as were just done in verifying \eqref{h3}.
\end{proof}

\begin{proof} [Proof of Theorem \ref{main}]
     Suppose, by contradiction, that there exists a sequence 
of stable critical points $\{u_\varepsilon\}$ such that 
$E_\varepsilon(u_\varepsilon) \leq C |\log \varepsilon|$, 
and, up to extraction, $n$ limiting vortices $b_1, b_2, ...,b_n$ 
with say, $K(b_1) >0$.

    Let $V = (V^1,V^2)$ be an arbitrary vector in 
$\mathbb R^2$. Then since we are assuming 
$n_\varepsilon^- =0$, in view of Proposition 1 and Theorem 
\ref{S}, we must have $n^- =0$, i.e.
    \begin{equation}\label{V}
        \frac{d^2}{dt^2}|_{t=0} W(b_1+tV, b_2, ..., b_n) = \sum_{i,j=1,2} \frac{\partial^2 W_1}{\partial x_i x_j} (b_1) V^iV^j \geq 0,
    \end{equation}
    where
    \[
        W_1(x) = \pi d_1^2 f(x) - \pi \sum_{j=1}^{n} d_1d_j \log|x-b_j|.
    \]
    Since the second term of $W_1$ is harmonic, we have
    \[
        \bigtriangleup W_1(b_1) = \pi d_1^2 \bigtriangleup f(b_1).
    \]
    Noting that the Gauss curvature at $b_1$ is given by
    \[
        0 < K(b_1) = -\frac{\bigtriangleup f}{e^{2f}} (b_1),
    \]
    we deduce that $D^2 f (b_1)$ has at least one negative 
eigenvalue, which contradicts \eqref{V}. Hence, if 
$u_\varepsilon$ are stable, the number of limiting vortices 
is 0, i.e. for $\varepsilon$ small enough, 
$|u_\varepsilon| \geq \frac{1}{2}$ in $\cal M$. However, 
as was mentioned in Remark 2.1, this implies that 
$u_\varepsilon$ is a constant.
\end{proof}

Now, let $\cal M$ be the surface obtained by rotating a regular curve

\[
        \gamma (s) = (\alpha(s),0,\beta(s)), \quad 0\leq s \leq l, \quad \alpha(s) > 0 \mbox{ for } s \neq 0, l.
\]
about the $z$-axis, where $s$ is the arc length, i.e. $| \gamma '| = 1$. 
Furthermore, make the assumptions: 
\begin{equation}\label{K}
    \alpha (0)  = \alpha(l) =\beta'(0) = \beta'(l) =  0, \mbox{ and either } 
    \beta''(0) \neq 0 \mbox{ or } \beta''(l)\not=0.
\end{equation}
We will henceforth assume $\beta''(0)\not=0$, 
the other case being similar. Denoting by $\theta$ the 
rotation angle, we then have a parametrization of $\cal M$
\begin{equation}\label{para}
        \mathbf x (s , \theta)=(\alpha(s)\cos ( \theta ), \alpha(s)\sin ( \theta ), \beta(s)), \quad 0 \leq s \leq l, \quad 0\leq \theta \leq 2\pi,
\end{equation}
and the induced metric
\[
         g_{\cal M} = ds^2 + \alpha^2(s) d\theta^2.
\]
Note in particular that, for $\cal M = \cal S$$^2$, we have $\alpha(\phi) = \sin(\phi)$ and
\[
         g_{\mbox{$\cal S$$^2$}} = d\phi^2 + \sin^2(\phi) d\theta^2.
\]

Consider a map $F:\cal S$$^2 \rightarrow \cal M$ such that parameter 
values $(\phi, \theta)$ corresponding to a point $p \in {\cal S}^2$ are 
mapped to parameter values $(S(\phi), \theta)$ in \eqref{para} 
corresponding to the point $q = \mathbf x (S(\phi), \theta)$, 
where $S(0) = 0$ and $S(\pi) = l$. Then $F$ is conformal if $S$ solves
\begin{equation}\label{cf}
    S'(\phi)\sin(\phi) = \alpha(s(\phi))
\end{equation}
Finally, we reparametrize $\cal M$ by defining $\mathbf y: \mathbb R^2 \bigcup \{ \infty\} \rightarrow \cal M$ through
\begin{align}\label{gy}
    & \mathbf y(x_1,x_2) = (\alpha(S(\phi))\cos(\theta), \alpha(S(\phi))\sin(\theta), \beta(S(\phi))) \mbox{ for } (x_1, x_2) \in \mathbb R^2 \bigcup \{\infty \},\notag \\
    &\mbox{where} \notag \\
    & \phi = \cos^{-1}(\frac{r^2 -1}{1+r^2}) \in [0,\pi], \notag \\
    & \theta = \tan^{-1}(\frac{x_2}{x_1}), \notag\\
    & r^2 = x_1^2 + x_2^2.
\end{align}
In other words, $\mathbf y = F \circ P^{-1}$, where  $P$ is the 
stereographic projection from $\cal S$$^2$ to the $x_1x_2$ plane. 
Using \eqref{cf} and \eqref{gy} the induced metric is given by
\[
    \tilde{g}_{\cal M} = \alpha^2(S(\phi))\frac{1}{r^2}(dx_1^2 + dx_2^2) \equiv e^{2f}(dx_1^2 + dx_2^2),
\]
i.e.
\begin{equation} \label{cff}
    f = \ln (\alpha(S(\phi)) \frac{1}{r}).
\end{equation}
Let $A = (\alpha'(S(\phi))+1) \frac{1}{r^2} \geq 0$, $B = -\frac{\alpha^2(S(\phi))}{r^2}K_{\cal M}$. With a direct calculation we obtain
\begin{equation}
    D^2f = \left(\begin{array}{ll}
                -A + \frac{x_1^2}{r^2} (2A+B) & \frac{x_1x_2}{r^2} (2A+B)\\
                 \frac{x_1x_2}{r^2} (2A+B)     &  -A + \frac{x_2^2}{r^2} (2A+B)
                \end{array} \right).
\end{equation}
Hence
\begin{equation} \label{231}
    Tr(D^2f) = B \mbox{ and } det(D^2f) = -A^2 - AB.
\end{equation}

Suppose that there exists a sequence of stable critical points 
$\{u_\epsilon\}$ such that $E_\epsilon(u_\epsilon) \leq C |\log \epsilon|$, 
and, up to extraction, $n$ limiting vortices $b_1, b_2, ...,b_n$. 
From Theorem \ref{main}, necessarily $K_{\cal M} (b_i) \leq 0$ for all 
$i$. In particular, none of vortices are at infinity since $K_{\cal M}$ 
is positive at the north pole of a surface of  revolution with 
$\beta''(0) \neq 0$. Thus we can use \eqref{cff} as the parametrization on $\cal M$.

However, when $K_{\cal M} \leq 0$, we have 
$Tr(D^2f) = B = -\frac{\alpha^2(S(\phi))}{r^2}K_{\cal M} \geq 0$, 
and $det(D^2f) \leq 0$. If there exists a $b_i$ such that 
$det(D^2f)(b_i) < 0$, then $D^2f(b_i)$ must have a negative 
eigenvalue. On the other hand, assume that $det(D^2f)(b_i) = 0$ 
for all $i$. We observe from \eqref{231} that $det(D^2f) =0$ if 
and only if $A=0$ i.e. $\alpha' = -1$ which implies that $B=0$ 
for the principle curvature in $\hat{\theta}$ direction is 0. Hence 
in this case, $D^2 f(b_i) = 0$ for all $i$, and the second variation 
of W only involves second derivatives of the log term given by
\begin{equation}
    -\pi \frac{d^2}{dt^2}|_{t=0}\sum_{i \neq j} d_id_j \log|b_i-b_j - t(V_i-V_j)| = \pi \sum_{i \neq j} d_id_j
\end{equation}
if we choose $V_i = b_i$. Then $\sum_{i=1}^n d_i =0$ implies that
\[
    \sum_{i \neq j} d_id_j = - \frac{1}{2} \sum_{i=1}^n d_i^2 <0.
\]
We have proved :

\begin{thm} \label{thm3}
    Let  $\cal M$ be a surface of revolution satisfying 
\eqref{K}, and $\{u_\varepsilon \}$ be a family of nonconstant 
solutions to \eqref{GL} such that 
$E_\varepsilon (u_\varepsilon) \leq C |\log \varepsilon|$.
Then for $\varepsilon$ small enough, $u_\varepsilon$ is unstable.
\end{thm}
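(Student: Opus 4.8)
The plan is to argue by contradiction and, exactly as for Theorem \ref{main}, to transfer instability of $u_\varepsilon$ to the failure of the Hessian of $W$ to be positive semidefinite at the limiting vortex configuration. Suppose there were a sequence of nonconstant stable critical points $\{u_\varepsilon\}$ with $E_\varepsilon(u_\varepsilon)\le C|\log\varepsilon|$. By Proposition \ref{Baraket} we extract limiting vortices $b_1,\dots,b_n$ with $\sum_i d_i=0$ and each $d_i\ne 0$; since $u_\varepsilon$ is nonconstant we must have $n\ge 1$ (otherwise $|u_\varepsilon|\ge\frac12$ on all of $\cal M$ and $u_\varepsilon$ is constant, as recalled in Remark 2.1), and then $n\ge 2$ by degree balance. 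Combining the Proposition verifying \eqref{h1}--\eqref{h4} with Theorem \ref{S}, stability ($n_\varepsilon^-=0$) forces $n^-=0$, i.e. $\frac{d^2}{dt^2}|_{t=0}W(\mathbf b+t\mathbf V)\ge 0$ for every $\mathbf V$. The first reduction is local: by Theorem \ref{main} a vortex at a point of positive Gauss curvature already destroys stability, so necessarily $K_{\cal M}(b_i)\le 0$ for all $i$. Since $\beta''(0)\ne 0$ makes $K_{\cal M}$ strictly positive at the north pole, no $b_i$ is the pole; in the conformal coordinates this means no vortex sits at infinity, so \eqref{cff} and the explicit form of $D^2f$ are available at every $b_i$.

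I would then read off the spectrum of $D^2f$ from its explicit form. With $A=(\alpha'(S(\phi))+1)/r^2\ge 0$ and $B=-\alpha^2(S(\phi))K_{\cal M}/r^2$, the matrix $D^2f=-A\,\mathrm{Id}+(2A+B)\,xx^{T}/r^{2}$ is a multiple of the identity plus a rank-one radial term, hence diagonal in the radial/angular frame with eigenvalues $A+B$ (radial) and $-A$ (angular); equivalently $\operatorname{Tr}(D^2f)=B$ and $\det(D^2f)=-A(A+B)$ as in \eqref{231}. Under $K_{\cal M}(b_i)\le 0$ we have $B(b_i)\ge 0$, so $A+B\ge 0$ and $\det(D^2f)(b_i)\le 0$: the angular eigenvalue $-A(b_i)$ is always the non-positive one. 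This naturally splits the argument at the degenerate locus $\det(D^2f)(b_i)=0$.

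The clean case is when $\det(D^2f)(b_i)=0$ for every $i$. Then $A(b_i)=0$ (as $A,A+B\ge 0$), which means $\alpha'=-1$, hence $\beta'=0$, so the parallel-circle principal curvature and therefore $B$ vanish; thus $D^2f(b_i)=0$ for all $i$ and the curvature term disappears from the second variation. Testing with the scaling field $V_i=b_i$, each interaction factor becomes $\log|b_i-b_j-t(V_i-V_j)|=\log(1-t)+\log|b_i-b_j|$, so the second variation of $W$ equals $\pi\sum_{i\ne j}d_id_j$; since $\sum_i d_i=0$ this sum is strictly negative, contradicting $n^-=0$. This is the argument I would write out in full.

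The main obstacle is the complementary case, where some $A(b_i)>0$ and $D^2f(b_i)$ has a genuine negative (angular) eigenvalue. The tempting move is to perturb that single vortex along its angular eigenvector, as in Theorem \ref{main}; but there instability came from the trace $\operatorname{Tr}(D^2W_i)=\pi d_i^2\Delta f(b_i)$ being negative, whereas now $\Delta f(b_i)=-K_{\cal M}e^{2f}\ge 0$, so the trace test sees nothing. The difficulty is exactly that the logarithmic self-interaction adds a harmonic --- hence traceless but not sign-definite --- quadratic form to the $b_i$-block of $D^2W$, and this can a priori neutralize the favorable angular eigenvalue; already for two vortices the sign of the relevant $2\times 2$ determinant turns out to depend on the ratio $|b_i-b_j|/|b_i|$. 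To get past this I would abandon the single-vortex test and analyze the full Hessian $D^2W(\mathbf b)$, using the equilibrium relations $d_i\nabla f(b_i)=2\sum_{j\ne i}d_j(b_i-b_j)/|b_i-b_j|^2$ satisfied by the limiting (critical) configuration together with the rotational symmetry of $f$; these constrain the geometry (for $n=2$ they force the two vortices onto a common meridian, which simultaneously diagonalizes the radial and logarithmic Hessians) and should let one exhibit an explicit negative direction. Verifying that the interaction term cannot restore positive semidefiniteness in this case is where the real work lies.
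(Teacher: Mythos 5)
Your proposal tracks the paper's proof of Theorem \ref{thm3} step for step: the reduction via Theorem \ref{main} to $K_{\cal M}(b_i)\le 0$, the exclusion of the pole so that \eqref{cff} applies, the spectral reading of \eqref{231} (eigenvalues $-A$ and $A+B$ of $D^2f$), and the degenerate case $\det(D^2f)(b_i)=0$ for all $i$, where your observation that $A=0$ forces $\alpha'=-1$, hence $\beta'=0$, hence $B=0$ and $D^2f(b_i)=0$, followed by the scaling field $V_i=b_i$ and $\sum_{i\ne j}d_id_j=-\sum_i d_i^2<0$, is exactly the paper's argument. The one place you stop --- the case where some $\det(D^2f)(b_i)<0$, i.e.\ $A(b_i)>0$ --- is precisely where the paper does not stop: it disposes of that case in a single sentence, asserting that $D^2f(b_i)$ then has a negative eigenvalue and taking this to contradict the nonnegativity of $D^2W(\mathbf b)$. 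In other words, the paper makes exactly the ``tempting move'' you decline to make --- perturb the single vortex $b_i$ along the angular eigendirection and read off instability from the eigenvalue $-A(b_i)$ of the block $\pi d_i^2D^2f(b_i)$ --- and it offers no discussion of the logarithmic interaction term at this step.

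So, judged as a self-contained proof, your write-up has a genuine gap: the nondegenerate case is left open and the theorem is not established. But your diagnosis of why that case is delicate is correct and is not answered by the paper. The Hessian at $b_i$ of $-\pi\sum_{j\ne i}d_id_j\log|x-b_j|$ is traceless but in general nonzero, and adding a traceless symmetric matrix to one with eigenvalues $\pi d_i^2(-A)$ and $\pi d_i^2(A+B)$, $A>0$, $B\ge0$, can in principle produce a positive semidefinite sum (e.g.\ a perturbation diagonal in the same frame with entries $\pi d_i^2A$ and $-\pi d_i^2A$ yields $\mathrm{diag}(0,\pi d_i^2B)$); the trace identity that powered Theorem \ref{main} gives nothing here since the trace of the $b_i$-block is $\pi d_i^2B(b_i)\ge0$. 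Closing this case therefore requires extra input --- the criticality of $\mathbf b$ for $W$, the rotational invariance of $f$ (which makes the simultaneous angular perturbation $V_i=Jb_i$ a null direction of $D^2W$), or a multi-vortex test direction --- along the lines you sketch but do not carry out. In short: you have not proved the theorem, but the step you are missing is supplied by the paper only as a bare assertion; to finish you must either justify the single-vortex angular perturbation despite the interaction term or complete the analysis of the full Hessian that you outline.
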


\begin{rk}
    From Serfaty's result (\cite{S}) on bounded simply connected 
domains in $\mathbb R^2$ and the example of surfaces of revolution, 
we conjecture that Theorem \ref{thm3} holds for any simply connected 
compact surface, regardless of curvature conditions.
\end{rk}

\section{Vortex Annihilation}

In this section we look for conditions on a manifold that will imply
the ultimate annihilation of vortices under the Ginzburg-Landau heat
flow. To this end, let $(\cal M$$,g)$ be a smooth $2$-manifold with
boundary and consider the initial-boundary value problem
\begin{equation} \label{DGL}
                \left\{\begin{array}{ll}
                 u_t - \bigtriangleup _{\cal M} u = (1-|u|^2)u & \mbox{in $\cal M \times \mathbb R_+$} \\
                 u = e & \mbox{on $\partial \cal M \times \mathbb R_+$} \\
                 u = u_0 & \mbox{on $\cal M \times $$\{ t = 0 \} $}
                \end{array} \right.
\end{equation}
where for convenience we will associate $\mathbb C$ with 
$\mathbb R^2$ and consider 
$u:{\cal M} \times \mathbb R_+ \rightarrow \mathbb R^2$. 
Here $e$ is a constant unit vector and the initial data $u_0$ is 
allowed to have any number of vortices as long as their total 
degree satisfies $\sum d_i =0$.

Existence and regularity are standard for this problem:
\begin{prop}
If $u_0 \in W^{k,p}(\cal M)$ with $k > 2 + \frac{2}{p}$ for some $1 \leq p < \infty$ and $u_0=e$ on $\partial {\cal M}$, then \eqref{DGL} has a solution that exists for all time that is uniformly bounded. Furthermore, for each $T>0$,
\begin{align}
        &||u||_{C([0,T),W^{k,p}(\cal M))} \leq C(||u||_{L^{\infty}}) \label{P21} \\
        &||u||_{C^{1}([0,T),W^{k-2, p}(\cal M))} \leq C(||u||_{L^{\infty}}) \label{P22}
\end{align}
\end{prop}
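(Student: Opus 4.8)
The plan is to treat \eqref{DGL} as a semilinear parabolic system and to combine standard local well-posedness with a maximum-principle bound in order to upgrade to a global, uniformly bounded solution with the stated regularity. First I would homogenize the boundary data by writing $u = e + v$, so that $v = 0$ on $\partial \mathcal{M}$, $v(0) = u_0 - e$, and $v$ solves $v_t - \Delta_{\mathcal{M}} v = N(v)$ with $N(v) = (1-|e+v|^2)(e+v)$ a cubic, hence smooth, nonlinearity. Since $\dim \mathcal{M} = 2$ and $k > 2 + \tfrac{2}{p}$, the space $W^{k,p}(\mathcal{M})$ embeds into $C^2$ and is a Banach algebra, so $N$ maps $W^{k,p}$ into itself and is locally Lipschitz on bounded sets. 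I would then write the equation in mild (Duhamel) form using the analytic semigroup $e^{t\Delta_{\mathcal{M}}}$ generated by the Dirichlet Laplacian on the compact manifold $\mathcal{M}$, and apply the standard abstract theory of semilinear parabolic equations (a contraction-mapping argument in $C([0,\tau], W^{k,p})$) to obtain a unique solution on a maximal interval $[0, T_{\max})$.

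The crucial a priori estimate is a uniform $L^\infty$ bound obtained from the maximum principle applied to the scalar quantity $w = |u|^2$. Using $\Delta_{\mathcal{M}} |u|^2 = 2\, u \cdot \Delta_{\mathcal{M}} u + 2 |\nabla u|^2$ together with the equation gives
\[
    \partial_t w - \Delta_{\mathcal{M}} w = 2w(1-w) - 2|\nabla u|^2 \le 2 w (1-w).
\]
On $\partial \mathcal{M}$ we have $w = |e|^2 = 1$, and at $t=0$ we have $w = |u_0|^2$. Comparing $w$ with the spatially constant supersolution $m := \max(1, \|u_0\|_{L^\infty}^2)$, which satisfies $\partial_t m - \Delta_{\mathcal{M}} m - 2m(1-m) = 2m(m-1) \ge 0$, the parabolic comparison principle for the locally Lipschitz reaction term yields $|u(\cdot,t)| \le \max(1, \|u_0\|_{L^\infty})$ throughout the existence interval. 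This is exactly the uniform bound asserted in the statement.

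With the $L^\infty$ bound in hand, global existence follows from the blow-up alternative: $T_{\max}$ is finite only if $\|u(t)\|_{W^{k,p}}$ blows up, but this norm is controlled by parabolic smoothing once $|u|$ is bounded. Concretely, I would bootstrap using the analytic-semigroup estimates $\|e^{t\Delta_{\mathcal{M}}}\|_{W^{s,p} \to W^{s',p}} \le C\, t^{-(s'-s)/2}$, the Duhamel formula, and the algebra property of $W^{k,p}$ to close an a priori bound on $\|u(t)\|_{W^{k,p}}$ on any finite interval, depending only on $\|u\|_{L^\infty}$; this gives \eqref{P21}. Then reading $u_t = \Delta_{\mathcal{M}} u + (1-|u|^2)u$ directly off the equation and estimating in $W^{k-2,p}$, using once more that $W^{k,p}$ is an algebra to bound $\|(1-|u|^2)u\|_{W^{k-2,p}}$, yields \eqref{P22}.

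The main obstacle I anticipate is the quantitative regularity step producing \eqref{P21} with a constant that depends only on $\|u\|_{L^\infty}$: the smoothing estimates are singular at $t=0$, so one must exploit the boundary compatibility of the data (the zeroth-order condition $u_0 = e$ on $\partial \mathcal{M}$ holds by hypothesis) and interpolate carefully, so that the finite-time $W^{k,p}$ bound remains uniform up to the initial time rather than degenerating there. Everything else is routine semilinear parabolic theory; the comparison argument for the $L^\infty$ bound and the algebra property of $W^{k,p}$ are the two ingredients that render the cubic nonlinearity tractable.
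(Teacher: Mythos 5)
Your proposal is correct, and it is in substance the argument the paper is invoking: the paper's entire proof is the citation ``This follows from Proposition 4.2 and 4.3 in \cite{T},'' which are precisely Taylor's local existence theorem for semilinear parabolic systems (contraction mapping in $C([0,\tau],W^{k,p})$ via the Duhamel formula and analytic-semigroup smoothing, using the Banach-algebra property of $W^{k,p}$ for $k>2/p$) together with his persistence principle (the solution continues, with $W^{k,p}$ bounds on compact time intervals, as long as a lower norm such as $L^\infty$ stays bounded). What you add, and what the paper leaves implicit, is the ingredient that makes the persistence criterion bite: the maximum-principle computation $\partial_t|u|^2-\Delta_{\mathcal M}|u|^2\le 2|u|^2(1-|u|^2)$ with the comparison against $m=\max(1,\|u_0\|_{L^\infty}^2)$, which gives the uniform bound $|u|\le\max(1,\|u_0\|_{L^\infty})$ and hence global existence. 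That computation is correct. One small caveat on bookkeeping: the constant in \eqref{P21}--\eqref{P22} cannot literally depend only on $\|u\|_{L^\infty}$ --- at $t=0$ the $W^{k,p}$ norm equals $\|u_0\|_{W^{k,p}}$, so the bound necessarily also depends on the initial data (and on $T$); your remark about the singularity of the smoothing estimates at $t=0$ is the right instinct here, and the resolution is simply that one propagates the $W^{k,p}$ regularity of $u_0$ forward rather than generating it by smoothing, exactly as in Taylor's persistence argument.
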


\begin{proof}
This follows from Proposition 4.2 and 4.3 in \cite{T}.
\end{proof}

From the gradient flow structure of \eqref{DGL} we also easily
establish:
\begin{prop}
For each $T>0$,
\begin{align}
    \int_0^T\int_{\cal M} & |u_t|^2dv_g + \int_{\cal M}[\frac{||\nabla_g u||^2_g}{2} + V(u)](\cdot ,T)dv_g \notag \\
                                      &= \int_{\cal M}[\frac{||\nabla_g u_0||^2_g}{2} + V(u_0)]dv_g
\end{align}
where $V(u) = \frac{1}{4}(1-|u|^2)^2$.
\end{prop}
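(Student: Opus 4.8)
The plan is to exploit the gradient-flow structure of \eqref{DGL}. The right-hand side equals $\bigtriangleup_{\cal M} u - \nabla_u V(u)$, where $\nabla_u V(u) = -(1-|u|^2)u$, so \eqref{DGL} is precisely the $L^2$-gradient flow of the energy $\mathcal E(u) = \int_{\cal M}[\frac{1}{2}\|\nabla_g u\|_g^2 + V(u)]\,dv_g$. I would therefore test the equation against $u_t$ and track the time derivative of $\mathcal E$ along the flow.

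First I would multiply the first line of \eqref{DGL} by $u_t$ and integrate over ${\cal M}$, obtaining
\[
\int_{\cal M}|u_t|^2\,dv_g = \int_{\cal M} u_t\cdot\bigtriangleup_{\cal M} u\,dv_g + \int_{\cal M} u_t\cdot(1-|u|^2)u\,dv_g.
\]
For the first term on the right I would apply Green's identity to move one derivative onto $u_t$, producing $-\int_{\cal M}\nabla_g u_t\cdot\nabla_g u\,dv_g$ together with a boundary term $\int_{\partial\cal M} u_t\cdot\partial_\nu u\,d\sigma$. This boundary term vanishes because the Dirichlet datum $u=e$ is independent of $t$, whence $u_t=0$ on $\partial{\cal M}\times\mathbb R_+$. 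Recognizing $\nabla_g u_t\cdot\nabla_g u = \frac12\partial_t\|\nabla_g u\|_g^2$ then identifies this contribution as $-\frac{d}{dt}\int_{\cal M}\frac12\|\nabla_g u\|_g^2\,dv_g$.

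For the potential term I would use $\nabla_u V(u) = -(1-|u|^2)u$, so that by the chain rule $u_t\cdot(1-|u|^2)u = -u_t\cdot\nabla_u V(u) = -\partial_t V(u)$, contributing $-\frac{d}{dt}\int_{\cal M} V(u)\,dv_g$. Combining the two pieces yields the pointwise-in-time dissipation identity $\frac{d}{dt}\mathcal E(u(t)) = -\int_{\cal M}|u_t|^2\,dv_g$, and integrating in $t$ from $0$ to $T$ gives exactly the claimed equality after moving $\mathcal E(u_0)$ to the right-hand side.

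The algebra above is routine; the point requiring genuine care is the justification of the formal manipulations — differentiating $\mathcal E(u(t))$ under the integral sign, performing the integration by parts with a well-defined boundary trace, and guaranteeing the finiteness of $\int_0^T\int_{\cal M}|u_t|^2$. All of these are supplied by the regularity of the previous proposition: the bounds \eqref{P21}--\eqref{P22} place $u\in C([0,T),W^{k,p})\cap C^1([0,T),W^{k-2,p})$ with $k>2+\frac2p$, which by Sobolev embedding renders $u$, $\nabla_g u$, and $u_t$ continuous in space-time, thereby licensing both the trace computation on $\partial{\cal M}$ and the exchange of $\frac{d}{dt}$ with $\int_{\cal M}$. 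I expect this regularity bookkeeping, rather than the computation itself, to be the only real obstacle.
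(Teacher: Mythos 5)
Your proof is correct and follows essentially the same route as the paper: test the equation against $u_t$, integrate by parts (the boundary term vanishing since $u_t=0$ on $\partial\mathcal M$), identify the resulting terms as $-\partial_t[\frac12\|\nabla_g u\|_g^2+V(u)]$, and integrate in time. The extra remarks on regularity are sensible but not part of the paper's argument, which treats these manipulations as justified by Proposition 3.1.
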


\begin{proof}
Taking inner product of \eqref{DGL}-1 with $u_t$ and integrating 
over $\cal M$ for a fixed $t$, we have
\begin{align}
        \int_{\cal M}|u_t|^2dv_g &= \int_{\cal M}u_t \cdot [\bigtriangleup _{\cal M}u + (1-|u|^2)u]dv_g \notag \\
                                               &= -\int_{\cal M}\langle \nabla_g u_t, \nabla_g u \rangle_g + V(u)_t dv_g \notag \\
                                               &= -\int_{\cal M}[\frac{||\nabla_g u||^2_g}{2}+V(u)]_tdv_g \notag
\end{align}
Integrating from $0$ to $T$, we get the desired equality.
\end{proof}

\begin{lem}
Suppose $p>2$ and $u_0$ satisfies the assumption of Proposition 3.1. 
Then for any sequence $\{t_n\}$ with $t_n \rightarrow \infty$ as 
$n \rightarrow \infty$, there exists a subsequence $\{t_{n_j}\}$ 
and a function $\bar{u}$ such that
\[
        u(\mathbf x,t_{n_j}) \rightarrow \bar{u}(\mathbf x) \quad in \quad C^2(\cal \bar{M}),
\]
and
\begin{equation} \label{lem1}
                \left\{\begin{array}{ll}
                 - \bigtriangleup _{\cal M} \bar{u} = (1-|\bar{u}|^2)\bar{u} & \mbox{in $\cal M$} \\
                \bar{u} = e & \mbox{on $\partial \cal M$}
                \end{array} \right.
\end{equation}
\end{lem}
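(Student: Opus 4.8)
The plan is to exploit the gradient-flow structure, combining the energy dissipation identity of Proposition 3.2 with the uniform regularity bounds of Proposition 3.1 through a time-translation and compactness argument. First I would record the consequences of the energy identity. Setting
\[
    \mathcal{E}(t) = \int_{\mathcal{M}}\Big[\tfrac{1}{2}\|\nabla_g u\|_g^2 + V(u)\Big](\cdot,t)\,dv_g,
\]
Proposition 3.2 shows that $\mathcal{E}$ is nonincreasing and bounded below by $0$, so $\mathcal{E}(t)$ decreases to a limit $E_\infty \ge 0$ and $\int_0^\infty\int_{\mathcal{M}}|u_t|^2\,dv_g\,dt = \mathcal{E}(0)-E_\infty < \infty$. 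In particular,
\[
    \int_{t_n-1}^{t_n+1}\int_{\mathcal{M}}|u_t|^2\,dv_g\,dt = \mathcal{E}(t_n-1)-\mathcal{E}(t_n+1) \longrightarrow 0 \quad\text{as } n\to\infty.
\]

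Next I would rescale in time. For $n$ large enough that $t_n\ge 1$, set $u^n(\mathbf x,s):=u(\mathbf x,t_n+s)$ on $\overline{\mathcal{M}}\times[-1,1]$. Taking $k=3$ (allowed since $p>2$ forces $2+2/p<3$), Proposition 3.1 bounds $\{u^n\}$ in $C([-1,1];W^{3,p}(\mathcal{M}))$ and $\{\partial_s u^n\}$ in $C([-1,1];W^{1,p}(\mathcal{M}))$, uniformly in $n$. Because $p>2$ yields the compact embedding $W^{3,p}(\mathcal{M})\hookrightarrow\hookrightarrow C^2(\overline{\mathcal{M}})\hookrightarrow W^{1,p}(\mathcal{M})$, an Aubin--Lions--Simon (Arzel\`a--Ascoli) argument produces a subsequence, still written $u^n$, with $u^n\to u^\infty$ in $C([-1,1];C^2(\overline{\mathcal{M}}))$.

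I would then identify $u^\infty$ as an $s$-independent steady state. The dissipation estimate above reads $\int_{-1}^1\int_{\mathcal{M}}|\partial_s u^n|^2 = \int_{t_n-1}^{t_n+1}\int_{\mathcal{M}}|u_t|^2 \to 0$; since $u^n\to u^\infty$ in $C([-1,1];C^2)$ and $\{\partial_s u^n\}$ is bounded in $L^2(\mathcal{M}\times[-1,1])$, we have $\partial_s u^n\rightharpoonup\partial_s u^\infty$ weakly in $L^2$, so weak lower semicontinuity gives $\int_{-1}^1\int_{\mathcal{M}}|\partial_s u^\infty|^2=0$. Hence $\partial_s u^\infty\equiv 0$ and $u^\infty(\mathbf x,s)=\bar u(\mathbf x)$ is independent of $s$. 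Passing to the limit in $\partial_s u^n-\bigtriangleup_{\mathcal{M}}u^n=(1-|u^n|^2)u^n$ using the $C^2$-in-space convergence and $\partial_s u^\infty=0$ yields $-\bigtriangleup_{\mathcal{M}}\bar u=(1-|\bar u|^2)\bar u$ in $\mathcal{M}$, while uniform convergence with $u^n=e$ on $\partial\mathcal{M}$ gives $\bar u=e$ on $\partial\mathcal{M}$. Finally, evaluating at $s=0$ gives $u(\cdot,t_{n_j})=u^{n_j}(\cdot,0)\to\bar u$ in $C^2(\overline{\mathcal{M}})$, which is the asserted convergence along a subsequence of the original $\{t_n\}$.

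The main obstacle, and the reason I translate in time rather than argue directly at the times $t_n$, is that the finiteness of $\int_0^\infty\|u_t\|_{L^2}^2\,dt$ only forces $\|u_t(\cdot,t)\|_{L^2}\to 0$ along \emph{some} sequence of times, which need not be a subsequence of the prescribed $\{t_n\}$. Averaging the dissipation over the windows $[t_n-1,t_n+1]$ and passing to the limit of the rescaled flows transfers the vanishing of the dissipation to the limit profile while keeping the convergence anchored at $s=0$, i.e. at the times $t_{n_j}$ themselves. The remaining technical care is to make the compactness strong enough to pass to the limit in the full nonlinearity in $C^2$; this is precisely where the hypothesis $p>2$ enters, through the compact embedding $W^{3,p}\hookrightarrow\hookrightarrow C^2(\overline{\mathcal{M}})$ on the bounded manifold $\mathcal{M}$.
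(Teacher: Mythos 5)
Your proof is correct, but the mechanism you use for the key step differs from the paper's. Both arguments extract the $C^2(\overline{\mathcal M})$-convergent subsequence from the uniform $W^{3,p}$ bound of Proposition 3.1 together with the compact embedding $W^{3,p}\hookrightarrow\hookrightarrow C^2(\overline{\mathcal M})$ (valid since $p>2$ forces $k\geq 3$), and both ultimately rest on the finiteness of $\int_0^\infty\int_{\mathcal M}|u_t|^2$ from Proposition 3.2. Where you diverge is in showing that the limit is a steady state: you translate in time, run an Aubin--Lions/Arzel\`a--Ascoli argument on the cylinders $\overline{\mathcal M}\times[-1,1]$, and kill $\partial_s u^\infty$ by weak lower semicontinuity of the $L^2$ norm against the vanishing windowed dissipation $\int_{t_n-1}^{t_n+1}\int_{\mathcal M}|u_t|^2\to 0$. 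The paper instead proves the stronger global statement $\|u_t(\cdot,t)\|_{L^\infty(\mathcal M)}\to 0$ as $t\to\infty$, by contradiction: if $|u_t(\mathbf x_n,t_n)|>\epsilon$ along some sequence, the uniform continuity of $u_t$ coming from the $C^1([0,T),W^{k-2,p})$ bound \eqref{P22} forces $|u_t|>\epsilon/2$ on space--time boxes of fixed size, making $\int_0^\infty\int_{\mathcal M}|u_t|^2$ diverge; one then passes to the limit in the equation directly at the times $t_{n_j}$. The paper's route is shorter and yields uniform decay of $u_t$ along the whole flow (not merely that the subsequential limit is stationary), while yours avoids any appeal to uniform continuity of $u_t$ in time and is the more robust template when only space--time integral bounds on $u_t$ are available. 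Both are complete; your handling of the limit in the nonlinearity (uniform convergence of $\Delta_{\mathcal M}u^n+(1-|u^n|^2)u^n$ matched against the weak limit of $\partial_s u^n$) is sound.
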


\begin{proof}
From \eqref{P21} of Proposition 3.1, the sequence $\{u(\cdot ,t_n)\}$ 
is uniformly bounded in $W^{k,p}$. So by the Sobolev embedding 
theorem, there is a subsequence $\{ t_{n_j} \}$ and a $C^2$ 
function $\bar{u}(\mathbf x)$ such that
\[
        u(\mathbf x,t_{n_j}) \rightarrow \bar{u}(\mathbf x) \quad in \quad C^2(\cal \bar{M}).
\]
\par
To prove $\bar{u}(\mathbf x)$ is a solution of \eqref{lem1}, 
first we show that $\lim_{t \rightarrow \infty}||u_t||_{L^\infty(\cal M)}= 0$. 
Assume by way of contradiction that there is a sequence 
$\{(\mathbf {x_n},t_n)\}$ with $t_n \rightarrow \infty$ such that 
$|u_t(\mathbf{x_n},t_n)|>\epsilon>0$. Since \eqref{P22} of 
Proposition 3.1 implies that $u_t$ is uniformly continuous, 
there exists a $\delta > 0$ so that for all $n$, we have
\[
|u_t(\mathbf x,t)|>\frac{\epsilon}{2} \quad for \quad (\mathbf x,t) \in B_{\delta}(\mathbf {x_n}) \times (t_n- \delta , t_n + \delta),
\]
where $B_{\delta}(\mathbf {x_n})$ is the geodesic ball in 
$\cal M$ centered at $\mathbf {x_n}$ with radius $\delta$. 
But then $\int_0^{\infty}\int_{\cal M}|u_t|^2dv_g  = \infty$ 
which contradicts Proposition 3.2. Now, taking the limit as 
$j \rightarrow \infty$ in \eqref{DGL}-1 at time $t_{n_j}$, 
we get \eqref{lem1}.
\end{proof}

\begin{lem}
Suppose v solves \eqref{lem1} such that $v(\mathbf x) \neq 0$ 
for all $\mathbf x$ in $\cal M$. Then v is a constant.
\end{lem}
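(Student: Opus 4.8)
The plan is to exploit that $v$ is nonvanishing to write it in polar form and decouple the equation into a phase equation and a modulus equation. Since $|v|>0$ everywhere and $\cal M$ is simply connected, the map $v/|v|:{\cal M}\to S^1$ admits a global smooth lift, so we may write $v=\rho\,e^{i\phi}$ with $\rho=|v|>0$ and $\phi$ a single-valued smooth real function. The Dirichlet condition $v=e$ forces $\rho\equiv 1$ on $\partial{\cal M}$; and because the simply connected surface of revolution under consideration has connected boundary on which $v$ equals the constant $e$, the lift $\phi$ is constant there, say $\phi\equiv\phi_0$ on $\partial{\cal M}$ with $e=e^{i\phi_0}$.

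Next I would substitute $v=\rho\,e^{i\phi}$ into $-\bigtriangleup_{\cal M}v=(1-|v|^2)v$ and separate real and imaginary parts. Using $\bigtriangleup_{\cal M}v=e^{i\phi}\big(\bigtriangleup_{\cal M}\rho-\rho|\nabla_g\phi|_g^2+i(2\langle\nabla_g\rho,\nabla_g\phi\rangle_g+\rho\bigtriangleup_{\cal M}\phi)\big)$, the imaginary part gives $2\langle\nabla_g\rho,\nabla_g\phi\rangle_g+\rho\bigtriangleup_{\cal M}\phi=0$, which after multiplication by $\rho$ becomes the divergence-form identity $\operatorname{div}_g(\rho^2\nabla_g\phi)=0$, while the real part gives $-\bigtriangleup_{\cal M}\rho+\rho|\nabla_g\phi|_g^2=(1-\rho^2)\rho$.

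The key step is then to show the phase is trivial. Testing $\operatorname{div}_g(\rho^2\nabla_g\phi)=0$ against $\phi-\phi_0$ and integrating by parts yields
\[
0=\int_{\partial{\cal M}}(\phi-\phi_0)\,\rho^2\frac{\partial\phi}{\partial\nu}\,ds-\int_{\cal M}\rho^2|\nabla_g\phi|_g^2\,dv_g .
\]
The boundary integral vanishes because $\phi-\phi_0=0$ on $\partial{\cal M}$, so $\int_{\cal M}\rho^2|\nabla_g\phi|_g^2\,dv_g=0$; since $\rho>0$ this forces $\nabla_g\phi\equiv 0$, hence $\phi\equiv\phi_0$ and $v=\rho\,e$ with $\rho$ real-valued. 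The modulus equation then reduces to the scalar problem $-\bigtriangleup_{\cal M}\rho=(1-\rho^2)\rho$ in $\cal M$, $\rho=1$ on $\partial{\cal M}$, with $\rho>0$.

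Finally I would close the argument with the maximum principle on the compact $\bar{\cal M}$. At an interior maximum $x_0$ of $\rho$ one has $\bigtriangleup_{\cal M}\rho(x_0)\le 0$, so $(1-\rho(x_0)^2)\rho(x_0)\ge 0$ and hence $\rho(x_0)\le 1$; combined with the boundary value this gives $\max_{\bar{\cal M}}\rho\le 1$. Symmetrically, at an interior minimum $\bigtriangleup_{\cal M}\rho\ge 0$ gives $(1-\rho^2)\rho\le 0$, and since $\rho>0$ this forces $\rho\ge 1$ there, so $\min_{\bar{\cal M}}\rho\ge 1$. Therefore $\rho\equiv 1$ and $v\equiv e$ is constant. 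I expect the only genuinely delicate point to be the globalization of the phase, namely justifying the single-valued lift and its constancy on $\partial{\cal M}$, which is exactly where simple connectedness (and connectedness of the boundary) enters; the remaining computations and the maximum-principle step are routine.
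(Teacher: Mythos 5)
Your proposal is correct and follows essentially the same route as the paper: polar decomposition of the nonvanishing solution, the divergence-form identity $\operatorname{div}_g(\rho^2\nabla_g\phi)=0$ tested against the (boundary-normalized) phase to kill $\nabla_g\phi$, and then the maximum principle on the resulting scalar equation for $\rho$. Your treatment of the global lift and of the two-sided maximum-principle step is somewhat more explicit than the paper's, but the argument is the same.
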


\begin{proof}
Write $e = e^{i \alpha_0}$ for some $\alpha_0 \in [0,2\pi)$. Then
from the assumption $v(\mathbf x) \neq 0$, we may write the function
$\tilde v = ve^{-i\alpha_0}$ in the form
\[
        \tilde v = \rho e^{i\alpha}
\]
for some smooth functions $\rho :\cal M \rightarrow \mathbb R_+$, 
and $0\leq \alpha < 2\pi$. Plugging this form into \eqref{lem1}, 
we have
\begin{equation} \label{lem2}
                 \left\{\begin{array}{ll}
                 \bigtriangleup _{\cal M} \rho - \rho ||\nabla_g \alpha||^2_g = \rho ^3 - \rho & \mbox{in $\cal M$} \\
                 \rho \bigtriangleup _{\cal M} \alpha  + 2\langle \nabla_g \rho ,\nabla_g \alpha \rangle_g = 0 & \mbox{in $\cal M$} \\
                 \rho = 1 & \mbox{on $\partial \cal M$} \\
                 \alpha = 0 & \mbox{on $\partial \cal M$} \\
                 \end{array} \right.
\end{equation}
Multiplying \eqref{lem2}-2 by $\alpha \rho$ and integrating, we get
\begin{align}
        0 & =  \int_{\cal M} \alpha [\rho^2 \bigtriangleup _{\cal M} \alpha  + 2\rho \langle \nabla \rho ,\nabla \alpha \rangle_g] dv_g \notag \\
           & = \int_{\cal M} \alpha \; \mathbf {div} (\rho^2 \nabla \alpha) dv_g \notag \\
           & = -\int_{\cal M} \rho^2|\nabla \alpha|^2_g dv_g \notag
\end{align}
Thus $\nabla \alpha = 0$ on $\cal M$, and so $\alpha\equiv 0$.

Then from \eqref{lem2}-1, since $\rho = |v| \geq 0$, we conclude 
by the maximum principle that $\rho \equiv 1$ on $\cal M$. 
This proves the lemma.
\end{proof}

In the last section, we deduced that the linear instability of
nonconstant critical points of the  Ginzburg-Landau energy for a
surface of revolution is independent of any curvature assumptions.
Now we will derive a result that is similar in spirit for the
parabolic problem \eqref{DGL} posed on a surface of revolution.
Consider a surface $\cal M$ with boundary defined parametrically as
in the last section:
\[
        \mathbf x ( s , \theta)=(\alpha(s) \cos (\theta), \alpha(s) \sin (\theta), \beta(s)), \quad 0 \leq s \leq l, \quad 0 \leq \theta \leq 2\pi
\]
with $\alpha(0) = \beta(0) = \beta'(0) = 0$, and $\alpha(s) > 0$ for $s \neq 0$. 
Note that $\partial {\cal M}=\{\mathbf x(l,\theta): 0\leq \theta \leq 2\pi\}$. 
We recall that the induced metric is
\[
    g = ds^2 + \alpha^2(s) d\theta^2.
\]

Now we present a crucial lemma which one can view as a 
kind of parabolic Pohozaev identity for heat flow on a manifold, 
cf. \cite{BCPS}, Lemma 4.1.

\begin{lem} \label{Pohozaev}
Let $H(s) = \int_0^s \alpha(\tilde{s})d\tilde{s}$, and let 
$\tilde{H}: \cal M \rightarrow \mathbb R$ be defined for any 
$p = \mathbf x (s,\theta) \in \cal M$ by the relation
$\tilde{H}(p) = H(s)$. Then for each $T>0$,
\begin{align}
        \int_0^T \int_{\cal M}& \tilde H|u_t|^2 + (\bigtriangleup_{\cal M} \tilde H)V(u)dv_gdt  + \int_{\cal M}\tilde H\left[\frac{||\nabla_g u||_g^2}{2}+V(u)\right](\cdot ,T)dv_g \notag \\
                                          &\leq \int_{\cal M}\tilde H\left[\frac{||\nabla_g u_0||_g^2}{2}+V(u_0)\right]dv_g.
\end{align}
\end{lem}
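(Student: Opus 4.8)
The plan is to combine two integral identities obtained by testing equation \eqref{DGL}-1 against two different multipliers, in the spirit of \cite{BCPS}, Lemma 4.1. The first is a \emph{weighted energy identity}: I would test against $\tilde{H}u_t$, integrate over ${\cal M}$, and integrate the $\tilde{H}u_t\cdot\bigtriangleup_{\cal M}u$ term by parts. Because $u=e$ is constant in time on $\partial{\cal M}$ we have $u_t=0$ there, so that boundary term drops, while the reaction term assembles into $\frac{d}{dt}\int_{\cal M}\tilde{H}V(u)$ by the chain rule. Writing $u=(u^1,u^2)$, this yields
\[
\frac{d}{dt}\int_{\cal M}\tilde{H}\Big[\tfrac{\|\nabla_g u\|_g^2}{2}+V(u)\Big]dv_g = -\int_{\cal M}\tilde{H}|u_t|^2\,dv_g-\int_{\cal M}\big\langle\nabla_g\tilde{H},\,\textstyle\sum_a u_t^a\nabla_g u^a\big\rangle_g\,dv_g,
\]
whose only nonstandard feature is the cross term on the right. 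Integrating in $t$ over $[0,T]$ reduces the Lemma to evaluating $\int_0^T\!\int_{\cal M}\langle\nabla_g\tilde{H},\sum_a u_t^a\nabla_g u^a\rangle_g$.

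The second ingredient is a \emph{spatial Pohozaev identity} at each fixed $t$, obtained by testing against $X(u):=\langle\nabla_g\tilde{H},\nabla_g u\rangle_g$ with $X=\nabla_g\tilde{H}$. The decisive geometric input is that for this weight the Hessian is pointwise a multiple of the metric: a direct computation in $g=ds^2+\alpha^2(s)\,d\theta^2$, using $\tilde{H}'=\alpha$ together with $\Gamma^s_{\theta\theta}=-\alpha\alpha'$ and $\Gamma^\theta_{s\theta}=\alpha'/\alpha$, gives
\[
\nabla_g^2\tilde{H}=\alpha'(s)\,g,\qquad\text{hence}\qquad\bigtriangleup_{\cal M}\tilde{H}=2\alpha'(s).
\]
With $\nabla_i X_j=\alpha' g_{ij}$, the two interior contributions arising when $\bigtriangleup_{\cal M}u^a\,X(u^a)$ is integrated by parts — one from $\nabla X$ and one from $\tfrac12\langle X,\nabla_g\|\nabla_g u\|_g^2\rangle_g$ — cancel exactly against $\tfrac12(\bigtriangleup_{\cal M}\tilde{H})\|\nabla_g u\|_g^2$, so the whole gradient contribution collapses onto $\partial{\cal M}$. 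I expect this cancellation, and the attendant verification that nothing is contributed at the pole $s=0$ where the surface closes up smoothly, to be the main obstacle. The latter I would handle by carrying out every integration by parts on ${\cal M}\setminus B_\delta$ about the pole and sending $\delta\to0$, using $\tilde{H}(p)\to0$ and $|X|_g\to0$ there, together with the $C^2$ bounds of Proposition 3.1, to kill the $\partial B_\delta$ integrals.

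Finally I would evaluate the surviving boundary terms on $\partial{\cal M}=\{s=l\}$, where the outward normal is $\nu=\partial_s$ and $\langle X,\nu\rangle_g=\alpha(l)>0$. Since $u\equiv e$ there, $V(e)=0$ and $\partial_\theta u=0$, so $\|\nabla_g u\|_g^2=|\partial_\nu u|^2$ on the boundary; the Pohozaev identity then reduces to
\[
\int_{\cal M}\big\langle\nabla_g\tilde{H},\textstyle\sum_a u_t^a\nabla_g u^a\big\rangle_g\,dv_g=\int_{\cal M}(\bigtriangleup_{\cal M}\tilde{H})V(u)\,dv_g+\tfrac12\,\alpha(l)\int_{\partial\cal M}|\partial_\nu u|^2\,dS.
\]
Substituting this for the cross term in the time-integrated energy identity eliminates it and, after moving $\int_0^T\!\int_{\cal M}(\bigtriangleup_{\cal M}\tilde{H})V(u)$ to the left, produces the stated relation as an \emph{equality} up to the extra term $-\tfrac12\alpha(l)\int_0^T\!\int_{\partial\cal M}|\partial_\nu u|^2\,dS\le0$. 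Discarding this nonnegative boundary integral — which is precisely where the estimate becomes an inequality rather than an identity — gives the Lemma.
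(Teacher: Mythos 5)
Your proposal is correct and follows essentially the same route as the paper's proof: multiply by $\tilde H u_t$, rewrite the resulting cross term via the multiplier $\langle\nabla_g\tilde H,\nabla_g u\rangle_g$ using the equation, exploit $\nabla_g^2\tilde H=\alpha' g$ (equivalently $\bigtriangleup_{\cal M}\tilde H=2\alpha'$) to cancel the interior gradient contributions, and discard the resulting sign-definite boundary term $-\tfrac12\int_{\partial\cal M}\alpha|\partial_{\mathbf n}u|^2\,dS$. Your extra attention to the degenerate pole $s=0$ is a harmless refinement not present in (and not needed by) the paper's argument, since $\tilde H$ is smooth on $\cal M$.
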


\begin{proof}
First, taking the inner product of \eqref{DGL}-1 with $\tilde Hu_t$ 
and integrating over $\cal M$ for a fixed $t$, we have
\begin{align} \label{lem51}
        \int_{\cal M}&\tilde H|u_t|^2dv_g = -\int_{\cal M} \langle \nabla_g u,\nabla_g (\tilde Hu_t) \rangle_gdv_g - \int_{\cal M} \tilde HV(u)_tdv_g +  \int_{\partial \cal M} \frac{\partial u}{\partial \mathbf n} \cdot \tilde H u_t \notag \\
                            &= - \int_{\cal M} \tilde H\left[\frac{||\nabla_g u||_g^2}{2}+V(u)\right]_tdv_g - \int_{\cal M} u_t \cdot \langle \nabla_g u,\nabla_g \tilde H\rangle_gdv_g,
\end{align}
where $\langle \nabla_g u, \nabla_g v\rangle_g \equiv \sum_{i=1}^2 \langle \nabla_g u_i, \nabla_g v_i\rangle_g$ for functions $u=(u_1,u_2)$, $v=(v_1,v_2)$ in $\mathbb R^2$. Here we have used the boundary condition $u = e$ on $\partial \cal M \times \mathbb R_+$, to chop the boundary integral.

Next, using \eqref{DGL}-1 and integrating by parts, we obtain
\begin{align}
         - \int_{\cal M} &u_t \cdot \langle \nabla_g u,\nabla_g \tilde H\rangle_gdv_g \notag \\
                                &= - \int_{\cal M} \bigtriangleup_{\cal M} u \cdot \langle \nabla_g u,\nabla_g \tilde H\rangle_gdv_g - \int_{\cal M} (1-|u|^2)u \cdot \langle \nabla_g u,\nabla_g \tilde H\rangle_gdv_g \notag \\
                                &= - \int_{\cal M} \bigtriangleup_{\cal M} u \cdot \langle \nabla_g u,\nabla_g \tilde H\rangle_gdv_g + \int_{\cal M} \langle \nabla_g V(u),\nabla_g \tilde H\rangle_gdv_g \notag \\
                                &= - \int_{\cal M} \bigtriangleup_{\cal M} u \cdot \langle \nabla_g u,\nabla_g \tilde H\rangle_gdv_g -  \int_{\cal M} (\bigtriangleup_{\cal M} \tilde H)V(u)dv_g
\end{align}
Integrating by parts twice in the first term on the right hand side yields

\begin{align} \label{lem53}
         - \int_{\cal M}& \bigtriangleup_{\cal M} u \cdot \langle \nabla_g u,\nabla_g \tilde H\rangle_gdv_g  \notag \\
                               &=\int_{\cal M} \frac{1}{2}\langle \nabla_g || \nabla u||^2_g,\nabla_g \tilde H\rangle_g + \alpha' || \nabla_g u||^2_gdv_g - \int_{\partial \cal M} \frac{\partial u}{\partial \mathbf n} \cdot \langle \nabla_g u,\nabla_g \tilde H\rangle_gdS  \notag \\
                               &=\frac{1}{2} \int_{\partial \cal M} ||\nabla_g u||^2_g \langle \nabla_g \tilde H,\mathbf n \rangle_g dS - \int_{\partial \cal M} \frac{\partial u}{\partial \mathbf n} \cdot \langle \nabla_g u,\nabla_g \tilde H\rangle_gdS
\end{align}
Note that $\nabla_g \tilde H = \alpha \mathbf n$ on $\partial \cal M$, \eqref{lem53} can be rewritten as

\begin{align} \label{lem54}
         \frac{1}{2} \int_{\partial \cal M}& ||\nabla_g u||^2_g \langle \nabla_g \tilde H,\mathbf n \rangle_g dS - \int_{\partial \cal M} \frac{\partial u}{\partial \mathbf n} \cdot \langle \nabla_g u,\nabla_g \tilde H\rangle_gdS \notag \\
                                                           &=\frac{1}{2} \int_{\partial \cal M} \alpha \left[|\frac{\partial u}{\partial \mathbf {\tau}}|^2_g - |\frac{\partial u}{\partial \mathbf n}|^2_g\right] dS \notag \\
                                                           &=-\frac{1}{2} \int_{\partial \cal M} \alpha |\frac{\partial u}{\partial \mathbf n}|^2_gdS
\end{align}
Combining \eqref{lem51}-\eqref{lem54}, since $\alpha \geq 0$, we have
\[
        \int_{\cal M}\tilde H|u_t|^2 + (\bigtriangleup_{\cal M} \tilde H)V(u)dv_g  + \int_{\cal M}\tilde H\left[\frac{||\nabla_g u||_g^2}{2}+V(u)\right]_tdv_g \leq 0
\]
Integrating from $0$ to $T$ gives the desired inequality.
\end{proof}

\begin{thm}\label{KoShin}
Assume $p>2$ and $u_0$ satisfies the assumption of Proposition 3.1. 
If $\alpha'(s) \geq c>0$ for $0 \leq s \leq l$, then $u(x,t) \rightarrow e$ 
uniformly as $t \rightarrow \infty$. In particular, $u$ has no vortices 
after some finite time $T$.
\end{thm}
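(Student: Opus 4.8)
The plan is to turn the geometric hypothesis $\alpha'(s)\ge c>0$ into a coercive lower bound on $\bigtriangleup_{\cal M}\tilde H$, feed this into the parabolic Pohozaev inequality of Lemma \ref{Pohozaev} to show the potential energy is integrable in time, and then combine this with the compactness of Lemma 3.1 and the rigidity of Lemma 3.2 to force the flow to converge to $e$.

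First I would compute $\bigtriangleup_{\cal M}\tilde H$. Since $\tilde H$ depends only on $s$ and the metric is $g=ds^2+\alpha^2(s)\,d\theta^2$ with volume density $\sqrt{\det g}=\alpha$, the Laplace--Beltrami operator gives
\[
    \bigtriangleup_{\cal M}\tilde H=\frac{1}{\alpha}\,\partial_s\big(\alpha\,H'(s)\big)=\frac{1}{\alpha}\,\partial_s(\alpha^2)=2\alpha'(s),
\]
using $H'(s)=\alpha(s)$. The hypothesis $\alpha'(s)\ge c>0$ then yields $\bigtriangleup_{\cal M}\tilde H\ge 2c>0$ everywhere. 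Inserting this into Lemma \ref{Pohozaev} and discarding the manifestly nonnegative terms $\int_{\cal M}\tilde H|u_t|^2$ and $\int_{\cal M}\tilde H\big[\tfrac12\|\nabla_g u\|_g^2+V(u)\big](\cdot,T)$ (note $\tilde H=H(s)\ge 0$ because $\alpha\ge 0$), I obtain
\[
    2c\int_0^T\!\!\int_{\cal M}V(u)\,dv_g\,dt\le\int_0^T\!\!\int_{\cal M}(\bigtriangleup_{\cal M}\tilde H)V(u)\,dv_g\,dt\le\int_{\cal M}\tilde H\Big[\tfrac12\|\nabla_g u_0\|_g^2+V(u_0)\Big]dv_g.
\]
The right-hand side is a finite constant independent of $T$, so letting $T\to\infty$ gives $\int_0^\infty\!\int_{\cal M}V(u)\,dv_g\,dt<\infty$.

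Next I would upgrade this space-time bound to decay of $g(t):=\int_{\cal M}V(u(\cdot,t))\,dv_g$. Differentiating, $g'(t)=-\int_{\cal M}(1-|u|^2)(u\cdot u_t)\,dv_g$, which is bounded uniformly in $t$: Proposition 3.1 controls $\|u\|_{L^\infty}$, and through \eqref{P22} with the embedding $W^{k-2,p}\hookrightarrow C^0$ (valid since $k>2+\tfrac2p$) it also controls $\|u_t\|_{L^\infty}$ uniformly in $t$. Thus $g$ is Lipschitz; being nonnegative and integrable on $[0,\infty)$, Barbalat's lemma forces $g(t)\to 0$. Now take any sequence $t_n\to\infty$. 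By Lemma 3.1 a subsequence satisfies $u(\cdot,t_{n_j})\to\bar u$ in $C^2(\bar{\cal M})$ with $\bar u$ solving \eqref{lem1}, and $g(t_{n_j})\to\int_{\cal M}V(\bar u)\,dv_g=0$. Since $V\ge 0$ this gives $|\bar u|\equiv 1$, so $\bar u$ never vanishes and Lemma 3.2 forces $\bar u$ to be constant; the boundary condition pins $\bar u=e$. Because $\{u(\cdot,t)\}$ is precompact in $C^2(\bar{\cal M})$ (uniform $W^{k,p}$ bound plus compact embedding) and its only possible cluster point is $e$, the full flow converges, $u(\cdot,t)\to e$ in $C^2$, hence uniformly. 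Finally, uniform convergence to the unit vector $e$ forces $|u(\cdot,t)|\ge\tfrac12$ for all $t$ beyond some finite $T$, so $u$ has no zeros and in particular no vortices after time $T$.

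The routine ingredients are the Laplacian computation and the energy manipulations; the step I expect to require the most care is passing from the finite space-time integral of $V(u)$ to genuine decay of $g(t)$ for \emph{all} $t\to\infty$ rather than merely along a subsequence, since both the uniform convergence and the eventual absence of vortices rely on controlling the entire family. This is precisely where the uniform-in-time $L^\infty$ bound on $u_t$ from Proposition 3.1 and the resulting uniform continuity of $g$ are essential; without them one could extract only a single favorable sequence of times and would not rule out a nonconstant vortex-carrying steady state as a limit.
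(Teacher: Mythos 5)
Your proof is correct and follows essentially the same path as the paper: the coercivity $\bigtriangleup_{\cal M}\tilde H = 2\alpha'(s) \ge 2c > 0$ fed into Lemma \ref{Pohozaev} to obtain $\int_0^\infty\int_{\cal M}V(u)\,dv_g\,dt < \infty$, followed by Lemma 3.1 and Lemma 3.2 to identify every subsequential limit as the constant $e$. The only cosmetic difference is that you pass through Barbalat's lemma applied to $t\mapsto\int_{\cal M}V(u(\cdot,t))\,dv_g$, whereas the paper reruns the uniform-continuity contradiction argument from the proof of Lemma 3.1 directly on $V(u(x,t))$ to conclude $|u|\to 1$ uniformly; both steps rest on the same uniform-in-time bounds from Proposition 3.1.
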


\begin{proof}
Since $\bigtriangleup_{\cal M}\tilde H = 2\alpha'(s) \geq c > 0$, from Lemma 3.3, we have
\[
        \int_0^{\infty} \int_{\cal M}V(u)dv_gdt < \infty.
\]
Then arguing as in Lemma 3.1, we obtain
\begin{equation} \label{thm41}
        \lim_{t \rightarrow \infty} |u(x,t)|= 1 \mbox{ uniformly for } x \in {\cal M}.
\end{equation}
Now, \eqref{thm41} and Lemma 3.2 implies that $||u(\cdot ,t)-e||_{L^{\infty}} \rightarrow 0$
\end{proof}

\begin{rk}
From Proposition 3.1 there exists a $\delta > 0$ independent on $t$ 
such that $|u| > \frac{1}{2}$ in 
$\{\mathbf x(s,\theta) : \;  l - \delta < s \leq \l,  \; 0\leq \theta \leq 2\pi\}$. 
Thus the condition of $\alpha$ can be replaced by 
$\alpha'(s) \geq c>0$ for $0 \leq s \leq l - \delta$ in the theorem.
\end{rk}

\begin{rk}
The argument in the theorem does not involve the second 
derivative of $\alpha$. This indicates that the curvature does 
not affect the large-time behavior of the solution for this type 
of surface.
\end{rk}

\section*{Acknowledgment}
I would like to thank my adviser, Professor P. Sternberg, for his invaluable advice.

\end{document}